\theoremstyle{plain}
\newtheorem{theorem}{Theorem}
\newtheorem{defthm}[theorem]{Definition and Theorem}
\newtheorem{proposition}[theorem]{Proposition}
\newtheorem{lemma}[theorem]{Lemma}
\theoremstyle{definition}
\newtheorem{definition}[theorem]{Definition}
\theoremstyle{remark}
\newtheorem{remark}[theorem]{Remark}
\newcommand{\Q}{\mathbb Q}  
\newcommand{\R}{\mathbb R}  
\newcommand{\C}{\mathbb C}  
\newcommand{\N}{\mathbb N} 
\newcommand{\half}{\mathbb H}
\newcommand{\coleq}{\mathrel{\mathop:}=}
\newcommand{\abso}[1]{\left|#1\right|}
\newcommand{\eps}{\varepsilon}
\newcommand{\vphi}{\varphi}
\newcommand{\D}{\mathcal{D}}
\title{Laplace transformation of vector-valued distributions and applications to Cauchy-Dirichlet problems} 
\author{Michael Kunzinger\footnote{University of Vienna, Faculty of Mathematics, michael.kunzinger@univie.ac.at}, Eduard Nigsch\footnote{University of Vienna, Faculty of Mathematics, eduard.nigsch@univie.ac.at}, Norbert Ortner\footnote{University of Innsbruck, Department of Mathematics, mathematik1@uibk.ac.at}
}
\newcommand{\sL}{\mathscr{L}}
\newcommand{\sS}{\mathscr{S}}
\newcommand{\sB}{\mathscr{B}}
\newcommand{\sO}{\mathscr{O}}
\newcommand{\sD}{\mathscr{D}}
\newcommand{\sH}{\mathscr{H}}
\newcommand{\sK}{\mathscr{K}}
\newcommand{\pd}{\partial}
\date{\today}
\DeclareFontFamily{U}{mathx}{\hyphenchar\font45}
\DeclareFontShape{U}{mathx}{m}{n}{
      <5> <6> <7> <8> <9> <10>
      <10.95> <12> <14.4> <17.28> <20.74> <24.88>
      mathx10
      }{}
\DeclareSymbolFont{mathx}{U}{mathx}{m}{n}
\DeclareMathAccent{\widecheck}{0}{mathx}{"71}
\DeclareMathAccent{\wideparen}{0}{mathx}{"75}
\begin{document}

\maketitle

\begin{abstract}
We present two new proofs of the exchange theorem for the Laplace transformation of 
vector-valued distributions. We then derive an explicit solution to the Dirichlet problem of the 
polyharmonic operator in a half-space. Finally, we obtain explicit solutions to Cauchy-Dirichlet 
problems of iterated wave- and Klein-Gordon-operators in half-spaces.

%Our study follows three goals:
%\begin{itemize}
%\item Presentation of two alternative proofs of the exchange theorem for the Laplace transformation of 
%vector-valued distributions.
%\item Explicit solution to the Dirichlet problem of the polyharmonic operator in a half-space.
%\item Explicit solutions to Cauchy-Dirichlet problems of iterated wave- and Klein-Gordon-operators
%in half-spaces.
%\end{itemize}

\vskip 1em

\noindent
\emph{Keywords:} Laplace transformation, vector-valued distributions, poly- and metaharmonic operator, 
Dirichlet problem in half spaces, Poisson kernels, iterated wave- and Klein-Gordon operators, 
mixed (initial value and Dirichlet) problems in half-spaces, explicit solution formulae
\medskip

\noindent 
\emph{MSC2010:} 44A10, 35E05, 35E15, 35C05, 35J30, 35J40, 35L35.
\end{abstract}

\section{Introduction}
The exchange theorem for the Laplace transformation $\sL$ states that
\[ \sL ( S * T) = \sL S \cdot \sL T \qquad (S \in \sS'(\Gamma), T \in \sO_C'(\Gamma)) \]
with notation as explained in Section \ref{sec:new_proof} below (cf.\ \cite[Prop.~7, p.~308]{SCH3}). 
A first task of this study is to present two new proofs for the exchange theorem for vector-valued distributions $S$ and $T$. The original proof was given by L.~Schwartz in his theory of vector-valued distributions. Our first proof follows an idea indicated at the beginning of L.~Schwartz' proof, namely to apply a proposition on the convolution of two vector-valued distributions $S \in \sH(E)$, $T \in \sK(F)$, in which both the space $\sH$ and its strong dual $\sH'_b$ are assumed to be nuclear. Our second proof relies on a theorem of R.~Shiraishi on the convolution of vector-valued distributions that supposes only $\sH$ 
(and not necessarily $\sH'_b$) to be nuclear.

Our second goal is to use the Laplace transform of vector-valued distributions to deduce explicit solution formulae for the Cauchy-Dirichlet problem of the operators
\begin{align*}
 & (\Delta_n + \pd_y^2 - \pd_t^2)^m \qquad & & \textrm{(iterated wave operator)} \\
 & (\Delta_{2n+1} + \pd_y^2 - \pd_t^2 - \xi^2)^m \qquad & & \textrm{(iterated Klein-Gordon operator)}\\
\intertext{in the half-space $y >0$ (Propositions \ref{prop:3} and \ref{prop:4}) by starting with the solutions of the Dirichlet problem of the elliptic operator}
 & (\Delta_n + \pd_y^2 - p^2)^m \qquad & & \textrm{(iterated metaharmonic operator)}
\end{align*}
(Propositions \ref{prop:2} and \ref{prop:2'}).
We assume the Cauchy data $u|_{t=0}$, $\pd_t u|_{t=0}$, $\dotsc$, $\pd_t^{2m-1} u|_{t=0}$ to vanish. In the terminology of R.~Courant and D.~Hilbert such problems are called ``transient response problems'' \cite[p.~224]{CH}. Compare also \cite[p.~85]{H}. The expression ``metaharmonic'' is borrowed from \cite{garnir} and from \cite{vekua}.

In Proposition \ref{Edenhofer} we recall the distributionally formulated solution to the Dirichlet problem of the iterated Laplace (i.e., polyharmonic) operator $(\Delta_n + \pd_y^2)^m$ in the half-space $y>0$, which was presented for the first time in \cite{E}.

We note that our method could also be used, e.g., to derive explicit formulae for the solution to the Cauchy-Dirichlet problem of the operator
\[
 (\Delta_n + \pd_y^2 - \pd_t)^m \qquad \textrm{(iterated heat operator)}
\]
in the half-space $y>0$.

For $m=1$, the solution of the Cauchy-Dirichlet problem can be found by an odd extension of the sought solution in the half-space, application of the distributional differentiation formula and convolution with the fundamental solution (in classical terms, by application of a representation theorem by means of Green's function). If $m>2$ the solution by extension is not known to us.

%\editodo{notation}
Our notation is standard, mostly following \cite{SCH1,SCH2,SCH3}.

\section{New proofs of L.~Schwartz' exchange theorem for the Laplace transform of the convolution of vector-valued distributions.}\label{sec:new_proof}

Let us first recall L.~Schwartz' version \cite[Prop.~43, p.~186]{SCH2}:

\begin{theorem}\label{thm1}
 Let $\Gamma$ be a non-void open convex subset of $\R^n$. Let $E$ and $F$ be separated locally convex topological vector spaces. Then there is a hypocontinuous (with respect to bounded sets) convolution mapping
\[ \overset{*}{\otimes} \colon \sS'(\Gamma)(E) \times \sS'(\Gamma)(F) \to \sS'(\Gamma)(E \wideparen \otimes_\pi F). \]

For two Laplace-transformable distributions $S \in \sS'(\Gamma)(E)$, $T \in \sS'(\Gamma)(F)$ with values in $E$ and $F$, respectively, and their Laplace images $\sL S, \sL T$ we have
\[ \sL ( S \overset{*}{\otimes} T) = (\sL S) \overset{\cdot}{\otimes} ( \sL T). \]
\end{theorem}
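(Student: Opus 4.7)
The plan is to reduce the vector-valued identity to the classical scalar exchange theorem ($\sL(s*t)=\sL s\cdot\sL t$ for $s\in\sS'(\Gamma)$, $t\in\sO_C'(\Gamma)$, already known from \cite{SCH3}) by exploiting the nuclearity of $\sS'(\Gamma)$ together with the density of elementary tensors. Since $\sS'(\Gamma)$ is nuclear, the kernel theorem for vector-valued distributions identifies $\sS'(\Gamma)(E)$ with the completed projective tensor product $\sS'(\Gamma)\wideparen\otimes_\pi E$, and analogously for $F$, so the elementary tensors $s\otimes e$ with $s\in\sS'(\Gamma)$, $e\in E$ are total. On such tensors the tensorial convolution reads
\[ (s\otimes e)\overset{*}{\otimes}(t\otimes f) = (s*t)\otimes(e\otimes f), \]
while the Laplace transform acts as $\sL\wideparen\otimes\mathrm{id}_E$, i.e.\ $\sL(s\otimes e)=(\sL s)\cdot e$, and $\overset{\cdot}{\otimes}$ multiplies the scalar factors while tensoring the vector factors. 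The claimed exchange identity thus collapses, on elementary tensors, to the scalar exchange theorem tensored with $\mathrm{id}_{E\otimes F}$.

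It then suffices to check that both sides of the asserted equality are hypocontinuous bilinear maps from $\sS'(\Gamma)(E)\times\sS'(\Gamma)(F)$ into the appropriate space of $E\wideparen\otimes_\pi F$-valued holomorphic functions on the tube over $\Gamma$, and to extend the elementary-tensor identity by continuity and density. For the first proof I would invoke a proposition of L.~Schwartz that constructs the convolution of vector-valued distributions $S\in\sH(E)$, $T\in\sK(F)$ under the assumption that both $\sH=\sS'(\Gamma)$ and its strong dual $\sH'_b$ are nuclear; this yields well-definedness, hypocontinuity, and the description on elementary factors in one package. For the second proof I would replace this by R.~Shiraishi's convolution theorem, whose hypotheses demand only nuclearity of $\sH$ itself. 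Hypocontinuity of the right-hand side --- i.e.\ of the pointwise tensor product of Laplace transforms --- follows from the multiplicative structure of the image space of the scalar Laplace transformation combined with the same kernel-theoretic identification.

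The main obstacle I foresee is the topological bookkeeping around the kernel theorem: confirming that $\sS'(\Gamma)$ meets the nuclearity and completeness hypotheses required by the Schwartz, resp.\ Shiraishi, convolution proposition; verifying that both the scalar convolution $*$ and the scalar Laplace transform extend continuously to the completed $\pi$-tensor products in a manner compatible with the natural inclusions into $E\wideparen\otimes_\pi F$; and establishing joint hypocontinuity, rather than mere separate continuity, of $\overset{*}{\otimes}$. Once these items are in place, the verification on the dense set of elementary tensors is an immediate consequence of the scalar exchange theorem, and the identity propagates by continuity to all Laplace-transformable $S$ and $T$.
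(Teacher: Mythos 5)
Your plan matches the paper's own route: in proving the sharpened Theorem~\ref{thm3} the authors invoke exactly the Schwartz proposition you describe (\cite[Prop.\ 3, p.\ 37]{SCH2}, requiring nuclearity of $\sS'(\Gamma)$ and of its strong dual) to build $\overset{*}{\otimes}$ and $\overset{\cdot}{\otimes}$, then deduce the exchange identity from the scalar case (\cite[Prop.\ 7, p.\ 308]{SCH3}) via density of elementary tensors, and they note the Shiraishi-based alternative in Remark~5 just as your ``second proof'' does. Two small points you leave implicit but that the paper makes explicit: the density of $\sS'(\Gamma)\otimes E$ in $\sS'(\Gamma)(E)$ is secured by the \emph{strict approximation property} of $\sS'(\Gamma)$ (\cite[Prop.\ 16, p.\ 59]{SCH1}), not just by nuclearity; and the nuclearity of the strong dual $(\sS'(\Gamma))'_b$ — which you flag as ``topological bookkeeping'' — is in fact the genuinely nontrivial step, handled in the paper by writing $\sS'(\Gamma)$ as a reduced countable projective limit of (DFS)-spaces $\sS'(\Gamma_f)$ and identifying $(\sS'(\Gamma))'_b$ with the corresponding inductive limit of nuclear (FS)-spaces (Lemma~\ref{lem}).
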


We will now explain the notions appearing in this theorem. First, the mappings $\overset{*}{\otimes}$ and $\overset{\cdot}{\otimes}$ in Theorem \ref{thm1} are defined as in \cite[Proposition 3, p.~37]{SCH2} and would be denoted by $*_\pi$ and $\cdot_\pi$ there, respectively. Also, $E \wideparen \otimes_\pi F$ denotes the quasi-completion of $E\otimes_\pi F$.

\begin{definition}[{\cite[p.~58]{SCH1}, \cite[p.~303]{SCH3}}]\label{def1}
 Let $\Gamma \subseteq \R^n$ be non-void and convex. The space of \emph{Laplace transformable distributions} $\sS'(\Gamma)$ is defined as
\[ \sS'(\Gamma) = \bigcap_{\xi \in \Gamma} e^{\xi x} \sS'_x = \{ S \in \sD'\ |\ \forall \xi \in \Gamma: e^{-\xi x} S(x) \in \sS'_x \}, \]
where $\sS'$ is the space of temperate distributions on $\R^n$. $\sS'(\Gamma)$ is endowed with the projective topology with respect to the linear maps $\sS'(\Gamma) \to \sS'$, $S(x) \mapsto e^{-\xi x } S(x)$ for $\xi \in \Gamma$.
\end{definition}

As usual, we denote by $\sO_C'$ the space of rapidly decreasing distributions on $\R^n$. By defining
\[ \sO_C'(\Gamma) = \bigcap_{\xi \in \Gamma} e^{\xi x} \sO_{C,x}' \]
analogously to $\sS'(\Gamma)$ we have $\sO'_C(\Gamma) = \sS'(\Gamma)$ if $\Gamma \ne \emptyset$ is convex and open \cite[p.~303]{SCH3}.
Also, for such $\Gamma$ the space $\sO_C'(\Gamma)$ is a commutative algebra with respect to convolution, which in turn is continuous \cite[Corollaire, p.~304]{SCH3}.

Let us recall L.\ Schwartz' definition of the Laplace transformation of \emph{scalar valued} distributions and the Paley-Wiener-Schwartz theorem:
\begin{defthm}[{\cite[Prop.~22, p.~76]{SCH1}, \cite[Prop.~6, p.~306]{SCH3}}]\label{defthm2}
 Let $\emptyset\not=\Gamma \subseteq \R$ be open and convex and $T^\Gamma \coleq \Gamma + i \R^n$ the tube domain over $\Gamma$. The \emph{Laplace transformation} $\sL$ is the mapping
 \[ \sL \colon \sO_C'(\Gamma) \to \sH(T^\Gamma),\quad S \mapsto \sL S(p) = \langle 1(x), e^{-px}S(x) \rangle,\quad p \in T^\Gamma. \]
 The vector-valued scalar product $\langle\ ,\ \rangle$ is defined on $\sO_C \times \sO_C'(\sH(T^\Gamma))$ due to $e^{-px} S(x) \in \sO'_{C,x}(\sH(T^\Gamma_p))$ \cite[Cor., p.~302]{SCH3}.
 
 $\sL$ is an isomorphism if $\sH(T^\Gamma)$ is endowed with the projective topology (with respect to the compact subsets $K$ of $\Gamma$) of the inductive limits
 \[ \sH(T^K) = \{ f \colon T^K \to \C\ \textrm{holomorphic}\ |\ \exists m \in \N_0: (1+\abso{p}^2)^{-m} f(p) \in L^\infty(T^K) \}. \]
\end{defthm}

% A comma normally follows participial phrases that introduce a sentence:
% When an adverbial phrase begins a sentence, it’s often followed by a comma but it doesn’t have to be, especially if it’s short. As a rule of thumb, if the phrase is longer than about four words, use the comma
Concerning the proof of Theorem \ref{thm1}, L.~Schwartz remarks that it could be realized by applying Proposition 3 in \cite[p.~37]{SCH2} to the spaces $\sS'(\Gamma)(E)$ and $\sS'(\Gamma)(F)$. But this procedure would require the proof of the nuclearity of the space $\sS'(\Gamma)$ ``which is easy'' and of the nuclearity of its strong dual ``which is not so easy''. Thus, he proceeds differently \cite[p.~187]{SCH2}. However, we aim at performing the proof in such a manner as L.~Schwartz remarked. As a byproduct we sharpen Theorem \ref{thm1} slightly.

\begin{theorem}\label{thm3}
 Let $\emptyset \ne \Gamma \subseteq \R^n$ be an open and convex set and $E$ and $F$ separated locally convex topological vector spaces. There exists a \emph{unique}  bilinear, \emph{continuous} mapping
 \[ \overset{*}{\otimes} \colon \sO_C'(\Gamma)(E) \times \sO_C'(\Gamma)(F) \to \sO_C'(\Gamma)(E \wideparen \otimes_\pi F) \]
which extends the mapping
 \begin{align*}
  \overset{*}{\otimes} \colon (\sO_C'(\Gamma) \otimes E) \times (\sO_C'(\Gamma) \otimes F) &\to \sO_C'(\Gamma) (E \otimes_\pi F)\\
  (S\otimes e,T\otimes f) & \mapsto (S*T) e\otimes f,
 \end{align*} 
wherein $* \colon \sO_C' (\Gamma) \times \sO_C'(\Gamma) \to \sO_C'(\Gamma)$ is the continuous convolution and $\otimes \colon E \times F \to E \otimes_\pi F$ the canonical bilinear and continuous mapping.
 
 If $\sL \colon \sO_C'(\Gamma)(E) \to \sH(T^\Gamma)(E)$ is the Laplace transformation of $E$-valued distributions then we have
 \[ \sL(S \overset{*}{\otimes} T) = \sL S \overset{\cdot}{\otimes} \sL T \]
 for $S \in \sO_C'(\Gamma)(E)$, $T \in \sO_C'(\Gamma)(F)$. Summarizing, we  have the commutative diagram
 \[
  \xymatrix{
  \sO_C'(\Gamma)(E) \times \sO_C'(\Gamma)(F) \ar[r]^-{\overset{*}{\otimes}} \ar[d]^-{\sL \times \sL}_-{\rotatebox{90}{$\cong$}} & \sO_C'(\Gamma) (E \wideparen \otimes_\pi F) \ar[d]^-{\sL}_-{\rotatebox{90}{$\cong$}} \\
  \sH(T^\Gamma)(E) \times \sH(T^\Gamma)(F) \ar[r]^-{\overset{\cdot}{\otimes}} & \sH(T^\Gamma)(E \wideparen \otimes_\pi F).
  }
 \]
\end{theorem}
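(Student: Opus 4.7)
My strategy follows L.~Schwartz's own hint: apply \cite[Proposition~3, p.~37]{SCH2} to the spaces $H=K=L=\sO_C'(\Gamma)$ equipped with the continuous scalar convolution $*\colon \sO_C'(\Gamma)\times\sO_C'(\Gamma)\to\sO_C'(\Gamma)$ of \cite[Corollaire, p.~304]{SCH3}. That proposition turns a continuous scalar bilinear map into a unique continuous bilinear extension between the $E$- and $F$-valued spaces landing in the $E\wideparen\otimes_\pi F$-valued space, provided the scalar spaces and their strong duals satisfy the nuclearity requirement. Applied here, it yields existence, uniqueness, continuity, and the prescribed behaviour on elementary tensors $(S_0\otimes e)\overset{*}{\otimes}(T_0\otimes f)=(S_0*T_0)\otimes(e\otimes f)$ in one stroke.

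The real work of the first half is therefore to verify the two nuclearity statements for $\sS'(\Gamma)=\sO_C'(\Gamma)$. Nuclearity of $\sS'(\Gamma)$ itself is immediate from Definition~\ref{def1}: its topology is the initial topology for the isomorphisms $S\mapsto e^{-\xi x}S$ into the nuclear space $\sS'$, so $\sS'(\Gamma)$ is a projective limit of nuclear spaces and hence nuclear. Nuclearity of the strong dual --- which Schwartz flagged as ``not so easy'' and which I expect to be the main technical obstacle --- I would attack by exhausting $\Gamma$ with a countable increasing sequence of compact convex subsets $K_m\nearrow\Gamma$ and identifying $\sS'(\Gamma)'_b$ with a countable inductive limit of strong duals of the partial intersections $\bigcap_{\xi\in K_m}e^{\xi x}\sS'$; these partial duals are built from $\sS$ and are nuclear, and countable inductive limits of nuclear spaces are nuclear. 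Care must be taken here because the $e^{\xi x}\sS'$ are DF rather than Fréchet, so the passage to the dual of a projective limit is not entirely mechanical.

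Once both nuclearity statements are in hand, the Laplace identity follows in three moves. First, it is tautological on elementary tensors, where $\sL(S_0\otimes e)=(\sL S_0)\otimes e$ reduces the equation $\sL(S\overset{*}{\otimes}T)=\sL S\overset{\cdot}{\otimes}\sL T$ to the scalar exchange theorem $\sL(S_0*T_0)=(\sL S_0)(\sL T_0)$ of \cite[Prop.~7, p.~308]{SCH3}. Second, nuclearity of $\sO_C'(\Gamma)$ makes $\sO_C'(\Gamma)\otimes E$ dense in $\sO_C'(\Gamma)(E)$ (and likewise for $F$), so the continuity of both sides of the identity --- the left by the extension theorem just established, the right by continuity of $\overset{\cdot}{\otimes}$ and of $\sL\times\sL$ --- propagates the equality to every $(S,T)$. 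Finally, the diagram commutes because $\sL$ is already a topological isomorphism in the scalar case by Definition and Theorem~\ref{defthm2}, and the $\wideparen\otimes_\pi$-construction is functorial, so the isomorphism extends to the vector-valued versions and closes the square.
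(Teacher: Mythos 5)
Your overall strategy coincides with the paper's: apply Proposition~3 of \cite[p.~37]{SCH2} to the scalar convolution $*\colon\sO_C'(\Gamma)\times\sO_C'(\Gamma)\to\sO_C'(\Gamma)$, using nuclearity of $\sO_C'(\Gamma)$ and of its strong dual, then deduce the exchange formula from the scalar case by density. The nuclearity of $\sO_C'(\Gamma)$ itself you handle correctly.

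The gap is precisely where you flag it yourself: you identify that passing from the projective limit to the dual ``is not entirely mechanical'' because the spaces $e^{\xi x}\sS'$ are (DF) rather than Fr\'echet, but you do not close this step. There are two issues with your sketch as written. First, exhausting by compact convex $K_m$ does not directly reduce to finitely many exponential shifts: $\bigcap_{\xi\in K_m}e^{\xi x}\sS'$ is still an uncountable intersection unless $K_m$ is a polytope. The paper instead observes that $\bigcap_{\xi\in\Gamma}e^{\xi x}\sS'=\bigcap_{\xi\in\Gamma\cap\Q^n}e^{\xi x}\sS'$ (via a convex-combination estimate $e^{-\xi x}=\alpha(x,\xi)\sum_j e^{-\xi_j x}$ with $\alpha(\cdot,\xi)\in\sB$), and then expresses $\sS'(\Gamma)$ as a \emph{countable reduced} projective limit $\varprojlim_{\Gamma_f}\sS'(\Gamma_f)$ over \emph{finite} rational subsets $\Gamma_f$; each $\sS'(\Gamma_f)$ is then a finite projective limit of (DFS)-spaces, hence itself a (DFS)-space whose strong dual is nuclear. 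Second, to identify $(\sS'(\Gamma))'_b$ with $\varinjlim(\sS'(\Gamma_f))'_b$ one needs a duality result for projective limits; the paper invokes \cite[4.4, p.~139]{Schae}, which gives the Mackey dual as an inductive limit, and then uses semireflexivity of $\sS'(\Gamma)$ (a consequence of nuclearity plus completeness, the latter from \cite[Prop.~5.3, p.~52]{Schae}) to pass from the Mackey to the strong topology. Without these two ingredients --- reducedness of the limit and the semireflexivity argument --- the claimed identification of the strong dual is not justified. Once this is supplied, your last three steps (elementary tensors, density, functoriality of $\wideparen\otimes_\pi$) complete the proof exactly as in the paper.
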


\begin{proof} %Theorem \ref{thm3} is an immediate consequence of 
The claim about the map $\overset{*}{\otimes}$ follows from
Proposition 3 in \cite[p.~37]{SCH2} because the space $\sO_C'(\Gamma)$ has the strict approximation property \cite[Proposition 16, p.~59]{SCH1}, is nuclear and its strong dual is nuclear (see Lemma \ref{lem} below), and because the convolution $\sO_C'(\Gamma) \times \sO_C'(\Gamma) \xrightarrow{*} \sO_C'(\Gamma)$ is continuous.  Concerning $\overset{\cdot}{\otimes}$, we note that the 
multiplication $\cdot : \sH(T^\Gamma)\times \sH(T^\Gamma) \to \sH(T^\Gamma)$ is continuous,
and since $\sH(T^\Gamma)\cong \sO_C'(\Gamma)$, we may again apply Proposition 3 in \cite[p.~37]{SCH2}. Finally, commutativity of the diagram follows from that of its scalar variant, which 
holds due to \cite[Proposition 7, p.\ 308]{SCH3}.
\end{proof}

\begin{lemma}\label{lem}
Let $\emptyset \ne \Gamma \subseteq \R^n$ be open and convex.
 \begin{enumerate}[label=(\roman*)]
  \item The space $\sO_C'(\Gamma)$ is nuclear and complete. %\todo{and therefore a semi-Montel and a semireflexive space -- eh klar, muss das auch gesagt werden?}
  %{\color{red}bornological, hence ultrabornological and a Montel, in particular, a reflexive space.}
  \item The strong dual $(\sO_C'(\Gamma))'_b$ of $\sO_C'(\Gamma)$ is nuclear.%{\color{red}, ultrabornological and complete}.
 \end{enumerate}
\end{lemma}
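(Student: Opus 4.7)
The plan for (i) is to reduce to nuclearity and completeness of the scalar space $\sO_C'$, both classical results of Schwartz (cf.\ \cite[Chap.~VII]{SCH1}). By Definition~\ref{def1}, $\sO_C'(\Gamma)$ carries the projective topology with respect to the family $(S \mapsto e^{-\xi x}S)_{\xi \in \Gamma}$, each map landing in $\sO_C'$; this realises $\sO_C'(\Gamma)$ as a closed subspace of $\prod_{\xi \in \Gamma}\sO_C'$. Since both nuclearity and completeness are stable under arbitrary products and under passage to closed subspaces, they transfer to $\sO_C'(\Gamma)$. Using a countable compact exhaustion of $\Gamma$ one may even represent this as a countable projective limit, which clarifies the structure but is not needed here.

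For (ii), the natural strategy is to transfer the question to $\sH(T^\Gamma)$ via the Laplace isomorphism of Definition and Theorem~\ref{defthm2}, which induces a TVS isomorphism $(\sO_C'(\Gamma))'_b \cong \sH(T^\Gamma)'_b$, so that it suffices to show the latter is nuclear. Now $\sH(T^\Gamma) = \varprojlim_K \sH(T^K)$ is a projective limit (over compact $K \subset \Gamma$) of the (LB)-spaces $\sH(T^K) = \varinjlim_m \sH^m(T^K)$. The key structural step is to verify that each $\sH(T^K)$ is of (DFN) type — equivalently, that for $K$ contained in the interior of some $K'$ the composite $\sH^m(T^{K'}) \to \sH^{m+1}(T^K)$ (inclusion at the Banach level followed by restriction of the tube) is nuclear. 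Granted this, each $\sH(T^K)'_b$ is a nuclear Fréchet space, and choosing a countable exhaustion of $\Gamma$ yields $\sH(T^\Gamma)'_b \cong \varinjlim_K \sH(T^K)'_b$, a countable inductive limit of nuclear Fréchet spaces, which is again nuclear.

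The technical heart lies in (ii): producing a nuclear factorisation for the restriction between weighted $H^\infty$-spaces of holomorphic functions on the tubes. I expect Cauchy's formula applied on a narrow polydisc sitting in the gap between $T^{K'}$ and $T^K$, together with the weight gain from $(1+\abso p^2)^{-m}$ to $(1+\abso p^2)^{-m-1}$, to express the restriction as an integral operator whose kernel admits a nuclear tensor decomposition in the Banach pair $(\sH^m(T^{K'}), \sH^{m+1}(T^K))$. Part (i), by contrast, is purely formal once the scalar nuclearity and completeness of $\sO_C'$ are granted.
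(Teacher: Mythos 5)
Your argument for part (i) is essentially the paper's. Both rest on the stability of nuclearity and completeness under passage to subspaces of products, which is the same thing as their stability under projective limits; the paper cites Grothendieck for nuclearity of the projective limit and Schaefer for its completeness, while you go through the product directly. One small point you take for granted is that the image of $\sO_C'(\Gamma)$ under $T\mapsto (e^{-\xi x}T)_{\xi\in\Gamma}$ is a \emph{closed} subspace of $\prod_{\xi\in\Gamma}\sO_C'$: this is true, because the compatibility relation $e^{\xi x}S_\xi = e^{\eta x}S_\eta$ is a closed condition when read off in the Hausdorff space $\sD'$, but it is not automatic from the definition of the projective topology and should be said.

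Your part (ii) takes a genuinely different route, and the gap sits exactly where you flag it as ``the technical heart.'' You transfer the problem through the Laplace isomorphism to $\sH(T^\Gamma) = \varprojlim_K \sH(T^K)$ and propose to show that each $\sH(T^K)$ is of (DFN) type by establishing that the linking maps $\sH^m(T^{K'}) \to \sH^{m+1}(T^K)$ are nuclear. You \emph{expect} this to follow from Cauchy's formula on a thin polydisc in the gap between the tubes together with the polynomial weight gain, but you do not carry it out, and the argument is not routine: the tubes $T^K$ are unbounded in the imaginary directions, so a polydisc Cauchy estimate gives no compactness there, and the extra factor $(1+|p|^2)^{-1}$ only gives decay, not a manifestly trace-class (as opposed to merely compact or Hilbert--Schmidt) decomposition. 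Without that lemma your proof of (ii) is incomplete. The paper sidesteps this difficulty entirely by staying on the $\sS'(\Gamma)$ side, where all the nuclearity input is classical: it first replaces the index set $\Gamma$ by the countable set $\Gamma\cap\Q^n$ (via a convexity argument from Schwartz), checks that the resulting projective limit $\sS'(\Gamma) = \varprojlim_{\Gamma_f}\sS'(\Gamma_f)$ over finite $\Gamma_f$ is \emph{reduced} (using density of $\sD$), applies Schaefer's duality theorem for reduced projective limits to identify the Mackey dual with $\varinjlim(\sS'(\Gamma_f))'$, upgrades Mackey to strong using semireflexivity of the nuclear complete space $\sS'(\Gamma)$, and then invokes only that $\sS'$ is a nuclear (DFS)-space and that finite projective limits of (DFS)-spaces are (DFS). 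Your sketch also passes silently over the reducedness of the limit and the Mackey-versus-strong question, both of which are needed for your claimed identification $\sH(T^\Gamma)'_b \cong \varinjlim_K\sH(T^K)'_b$ to be legitimate.
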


\begin{proof}
(i) The nuclearity of $\sO_C'(\Gamma)$ is an immediate consequence of \cite[Corollaire 2, p.~48]{G2} and the nuclearity of $\sO_C'$ (\cite[Th\'eor\`eme 16, p.~131]{G2}).
 
%  $\sO_C'(\Gamma)$ is isomorphic to the projective limit $\varprojlim \sO_C'(\Gamma_f)$ of the spaces $\sO_C'(\Gamma_f)$, $\Gamma_f$ being a finite subset of $\Gamma$.
By \cite[Proposition 5.3, p.~52]{Schae} the space $\sO_C'(\Gamma)$ is complete.
 
(ii) The projective limit $\sO_C'(\Gamma) = \sS'(\Gamma)$ is countable due to
 \[ \bigcap_{\xi \in \Gamma} e^{\xi x} \sS'_x = \bigcap_{\xi \in \Gamma \cap \Q^n} e^{\xi x} \sS'_x. \]

We only have to show the inclusion ``$\supseteq$'', the rest being elementary. For this, given $T \in \bigcap_{\xi \in \Gamma \cap \Q^n} e^{\xi x} \sS_x'$ and $\xi \in \Gamma$, choose $\xi_1, \dotsc, \xi_k\in \Gamma \cap \Q^n$ such that $\xi$ is in the convex hull of 
$\{ \xi_1, \dotsc, \xi_k \}$. By \cite[p.\ 301]{SCH3},
\[ e^{-\xi x} = \alpha(x,\xi) \sum_{j=1}^k e^{-\xi_j x} \]
with $\alpha(.,\xi) \in \sB$, so we have
\[ e^{-\xi x} T(x) = \alpha(x,\xi) \sum_{j=1}^k \underbrace{e^{-\xi_j x} T(x)}_{\in \sS'_x} \in \sS'_x. \] 

% {\color{red}
% (4) $\sO_C'(\Gamma)$ is barrelled by Lemma \ref{barrelled}.
%  
%  (5) By (1), (2) and (4), $\sO_C'(\Gamma)$ is nuclear, barrelled and complete, hence Montel 
% (\cite[Proposition 50.2, Corollary 3]{Trev}). In particular, it
% is a reflexive, complete Schwartz space. Hence, its strong dual $(\sO_C'(\Gamma))'_b$ is ultrabornological \cite[p.~43]{SCH1} and, in particular, barrelled. Consequently, $(\sO_C'(\Gamma))'_b$ carries the Mackey topology $\tau (( \sO_C'(\Gamma))', \sO_C'(\Gamma))$.
%  
%  (6) 
% By \cite[2.~Proposition, p.~38]{J} we may assume that $\sO_C'(\Gamma)$ is a reduced countable projective limit. Then taking into account 
% the nuclearity of $\sO_C$ \cite[Th\'eor\`eme 16, p.~131]{G2} the application of \cite[Proposition 4.4, p.~139]{Schae} yields that the strong dual $(\sO_C'(\Gamma))'_b$ of $\sO_C'(\Gamma)$ is a countable inductive limit of nuclear spaces and therefore nuclear by \cite[Corollaire 1, p.~48]{G2}. In virtue of the reflexivity of 
% $\sO_C$ the strong dual $(\sO_C'(\Gamma))'_b$ has the representation
%  \[ \left(\bigcap_{\xi \in \Gamma \cap \Q^n} e^{\xi x} \sO'_{C,x}\right)'_b \cong \sum_{\xi \in \Gamma \cap \Q^n} e^{-\xi x} \sO_{C,x}. \]
% }
 
It follows that $\sS'(\Gamma)$ is given by the projective limit
\[ \sS'(\Gamma) = \varprojlim_{\Gamma_f \subseteq \Gamma\textrm{ finite}} \sS'(\Gamma_f). \]
This limit is reduced because the inclusions $\sD \subseteq \sS'(\Gamma) \subseteq \sS'(\Gamma_f)$ and denseness of $\sD$ in $\sS'(\Gamma_f)$ imply that $\sS'(\Gamma)$ also is dense.
%  We show that $\sO_C'(\Gamma) = \sS'(\Gamma)$ is ultrabornological.
%  We enumerate $\Gamma \cap \Q^n = \{ \xi_1, \xi_2, \dotsc \}$ and, following  \cite{W}, define the projective spectrum $\cX = (X_n, \rho^n_m)$ with
%  \[ X_n = \bigcap_{i=1}^n e^{\xi_i x} \sS_x' \]
%  as well as, for $m \ge n$, the canonical inclusions $\rho^n_m \colon X_m \hookrightarrow X_n$. Then $\sS'(\Gamma) = \Proj \cX$ and we also have canonical inclusions $\rho^n: \sS'(\Gamma) \hookrightarrow X_n$ which have dense range because $\sD \subseteq \sS'(\Gamma)$ and $\sD$ is dense in $X_n$, which means that $\cX$ is reduced. Now $\sS'$ is a (DFS)-space (i.e., the dual of a Fr\'echet-Schwartz space).
% , which in the terminology of \cite{W} correponds to an (LS)-space.
% This class of spaces is stable under the formation of finite products
% and closed subspaces \cite[Theorem A.5.13, p.~253]{Morimoto}
% % (cf.\ \cite[p.\ 114, 2.2 (b),(c)]{Z}),
% so each $X_n$ is itself a (DFS)-space. 
{
By \cite[4.4, p.~139]{Schae} the dual $(\sS'(\Gamma))'$ endowed with the Mackey topology $\tau((\sS'(\Gamma))', \sS'(\Gamma))$ can be identified with the inductive limit of the spaces
\[ ((\sS'(\Gamma_f))', \tau((\sS'(\Gamma_f))', \sS'(\Gamma_f))). \]
Because $\sS'(\Gamma)$ is nuclear and complete it is semireflexive, hence the Mackey topology on its dual equals the strong topology \cite[Prop.~4, p.~228 and Prop.~8, p.~218]{Hor1}.
% Nun ist
% \[ \tau ( ( \sS'(\Gamma))', \sS'(\Gamma)) = \beta ( (\sS'(\Gamma))', \sS'(\Gamma)) \]
% because for a semireflexive, quasicomplete space $E$ we have $\tau(E', E) = \beta(E', E)$, i.e., every bounded subset of $E$ is contained in a (weakly) compact set, cf. Horvath, TVSD, Prop 4, p.228.
Hence, we have
\[ (\sS'(\Gamma))'_b = \varinjlim_{\Gamma_f \subseteq \Gamma \cap \Q^n} ( \sS'(\Gamma_f))'_b \]
and the nuclearity of $(\sO_C'(\Gamma))'_b = (\sS'(\Gamma))'_b$ follows by \cite[Cor.~1, p.~48]{G2} from the nuclearity of $(\sS'(\Gamma_f))'_b$. To see that the latter space is nuclear we note that $\sS'(\Gamma_f)$, as a finite projective limit of (DFS)-spaces, is itself a (DFS)-space because this class of spaces is stable under the formation of finite products and closed subspaces \cite[Theorem A.5.13, p.~253]{Morimoto}. Furthermore, $\sS'(\Gamma_f)$ is nuclear by \cite[Cor.~1, p.~48]{G2}, hence its strong dual is nuclear by \cite[Th\'eor\`eme 7, p.~40]{G2}.
% . (G2, Thm. 7, p.~40).
}
% We also see that $\Proj \cX$ is strongly reduced in the sense that
%  \[ \forall n\ \exists m: \rho^n_m ( X_m) \subseteq \overline{ \rho^n (\Proj \cX ) }. \]
%  In fact, this is clear with $m=n$ from $\sD \subseteq \sO_C'(\Gamma) \subseteq X_m$ and denseness of $\sD$ in $\sO_C'(\Gamma)$ \cite[p.~304]{SCH3}, implying $X_m = \overline{ \sO_C'(\Gamma) }$.
% {\color{red}Now we can apply \cite[Theorem 3.5]{W2}
% % \cite[Corollary 3.3.10]{W},
% which states that for $\Proj \cX$, bornologicity is equivalent to barrelledness; as the latter was established in (4), $\sO_C'(\Gamma)$ is ultrabornological, taking into account its completeness and \cite[Ch.~3, \S 15, Exercise 9.\ (b)]{Hor1}.
%  
%  (8) Due to (5), $(\sO_C'(\Gamma))'_b$ is ultrabornological by \cite[p.~43]{SCH1}, and complete by the bornologicity of $\sO_C'(\Gamma)$.
% }
\end{proof}

Further properties of the space $\sO_C'(\Gamma)$ will be published in an upcoming paper.

\begin{remark}A third proof of Theorem \ref{thm3} can be given by means of \cite[Theorem 2, p.~196]{Shir}, see also \cite[Theorem 5, p.~18]{BO}. Compared to 
\cite[Prop.\ 3, p.\ 37]{SCH3} it has the advantage that the nuclearity of $\sO_C'(\Gamma) = \sS'(\Gamma)$ is sufficient (Lemma \ref{lem} (i)),  while nuclearity of
its strong dual $\sO_C'(\Gamma)'_b$ need not be established.
Instead, one has to show that $\sO_C'(\Gamma)$ is $\dot B$-normal (which is straightforward) and that $\sO_C'(\Gamma) \otimes E$ is strictly dense in $\sO_C'(\Gamma)(E)$, which in turn is implied by the strict approximation property of $\sS'(\Gamma)$ (\cite[Proposition 16, p.\ 59]{SCH3}). In fact, by using \cite[Prop.\ 1, p.\ 19]{BO} we can even dispense with showing $\dot B$-normality
of $\sO_C'(\Gamma)$.
\end{remark}

\section{Poisson kernels for Dirichlet problems}
Out next aim is to reformulate a known result on the Poisson kernels of the Dirichlet problems of polyharmonic operators in half-spaces and to apply the partial Fourier transformation in order to deduce the Poisson kernels of the Dirichlet problems of the iterated metaharmonic operators in half-spaces. Then the theory of vector-valued distributions is applied in order to continue the results analytically. This method goes back to H.\ G.\ Garnir \cite{garnir}.

We follow the terminology
of \cite[p.\ 635]{ADN} and \cite[p.\ 140]{Shi}: The \emph{Poisson kernel} of the $j$-th Dirichlet problem for the
operator 
\[
(\Delta_n + \partial_y^2 - \xi^2)^m, \quad \Delta_n = \partial_1^2+\dots +\partial_n^2,\quad m,n\in \N,\ \xi\in \R
\]
in the half-space $\half=\{(x,y)\in \R^{n+1}: x\in \R^n,\ y>0\}$, $j=0,\dots,m-1$, is the distribution $E_j\in \D'(\half)$ for which  
\begin{align*}
(\Delta_n + \partial_y^2 - \xi^2)^m E_j &= 0 \qquad E_j \in \  \mathcal{O}_M(\half) = \{\varphi \in \mathcal{E}(\half) \ |\ \forall \alpha \in \N_0^n\ \exists k \in \N_0: \\
& \qquad \qquad \qquad (1+\abso{x}^2)^{-k/2} \partial^\alpha \varphi \in \mathcal{C}_0(\half) \}  \\
\partial_y^k E_j|_{y=0} &= \delta(x)\delta_{jk}, \quad k=0,\dots,m-1 \ \text{ in } \D'(\R^n).
\end{align*}
Here, $\mathcal{C}_0(\half) = \{\psi\in \mathcal{C}(\half) : \lim_{|(x,y)|\to \infty} \psi(x,y) =0 \}$.
The existence of the restrictions $\pd_y^k E_j|_{y=0}$ will follow from the explicit form of $E_j$ in Proposition \ref{Edenhofer} below (see also \cite[Theorem 4.4.8, p.~115]{lars1}).

For a more general notion of Poisson kernel we refer to \cite[Section 4.5, p.~137]{Tanabe}
% \miketodo{Ich habe jetzt die Verweise auf \cite[Theorem 4.4.8, p.~115]{lars1} und \cite[Example 2, p.~12]{malgrange}
% gestrichen, weil diese, wie Eduard richtig feststellt nur auf $|_{y=y_0}$ für $y_0>0$ anwendbar wären. OK?}
%\editodo{eigentlich: $|_{y=y_0}$ for $y_0>0$} is a consequence of \cite[Theorem 4.4.8, p.~115]{lars1} or of \cite[Example 2, p.~12]{malgrange}, together with \cite{E} (see below).

To begin with, we use \cite[Satz 3]{E} to derive the following result:
\begin{proposition}\label{Edenhofer} The Poisson kernel of the $j$-th Dirichlet problem for the polyharmonic (i.e., iterated
Laplace) operator $(\Delta_n+\partial_y^2)^m$ is given by
\[
E_j = \frac{2}{\omega_{n+1}} \frac{y^m}{j!(m-1-j)!} (-\partial_y)^{m-j-1} \left(\frac{1}{(|x|^2+y^2)^{\frac{n+1}{2}}}\right),
\]
where $\omega_{n+1} = \frac{2\pi^{\frac{n+1}{2}}}{\Gamma(\frac{n+1}{2})}$ denotes the surface measure of the unit sphere
in $\R^{n+1}$.
\end{proposition}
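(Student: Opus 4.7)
The proposition is essentially a reformulation of Edenhofer's Satz~3 in \cite{E}, so my plan is to show that the right-hand side $E_j$ satisfies the three defining properties of the Poisson kernel -- polyharmonicity, Cauchy data at $y=0$, and membership in $\mathcal{O}_M(\half)$ -- which then pins it down uniquely via the standard uniqueness theorem for the polyharmonic Dirichlet problem. An alternative, self-contained route proceeds via the partial Fourier transform in $x$: it reduces the problem to the ODE $(\partial_y^2-|\zeta|^2)^m \hat E_j = 0$ with Cauchy data $\partial_y^k \hat E_j(\zeta,0) = \delta_{jk}$, whose unique bounded-at-infinity solution is a polynomial in $y$ of degree $<m$ times $e^{-|\zeta|y}$; repeated use of $|\zeta|^a e^{-|\zeta|y} = (-\partial_y)^a e^{-|\zeta|y}$ together with the classical Fourier inversion of $e^{-|\zeta|y}$ then recovers the explicit formula after inversion.

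The verification splits as follows. For $(\Delta_n+\partial_y^2)^m E_j = 0$, note that $r^{1-n}$ (with $r=\sqrt{|x|^2+y^2}$) is proportional to the fundamental solution of $\Delta_n+\partial_y^2$ on $\R^{n+1}$, hence harmonic on $\half$; each $\partial_y$-derivative of $r^{1-n}$ remains harmonic, and $(-\partial_y)^{m-j-1}(r^{-(n+1)})$ is expressible in such derivatives via $\partial_y(r^{-(n-1)}) = -(n-1) y r^{-(n+1)}$. The Leibniz-type formula
\[
(\Delta_n+\partial_y^2)(y^k u) = k(k-1) y^{k-2} u + 2k y^{k-1} \partial_y u + y^k (\Delta_n + \partial_y^2) u
\]
then shows, by induction on $m$, that $y^m$ times a harmonic function is polyharmonic of order $\le m$. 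For the Cauchy data $\partial_y^k E_j|_{y=0} = \delta_{jk}\delta(x)$, expand $\partial_y^k(y^m g)$ by Leibniz with $g \coleq (-\partial_y)^{m-j-1}(r^{-(n+1)})$: for $x\neq 0$ every term still carries a positive power of $y$ and vanishes as $y\to 0^+$, so the boundary trace is supported at $x=0$. Its distributional coefficient is extracted from the scaling identity
\[
\int_{\R^n} \frac{y^{2\ell+1}}{(|x|^2+y^2)^{(n+2\ell+1)/2}}\,dx = \mathrm{const},
\]
a Beta-function computation, which combined with the prefactor $\frac{1}{j!(m-1-j)!}$ produces exactly $\delta(x)$ at $k=j$ and $0$ at $k\neq j$. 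Membership in $\mathcal{O}_M(\half)$ is immediate from the explicit form, since every derivative of $E_j$ is rational on $\half$ with the required polynomial growth.

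The main technical obstacle will be the second step: book-keeping the many Leibniz terms arising in $\partial_y^k(y^m g)$, showing that those still proportional to a positive power of $y$ vanish in $\mathcal{D}'(\R^n)$ as $y\to 0^+$, and verifying that the combinatorial prefactor $\frac{1}{j!(m-1-j)!}$ together with the Beta-function scaling constant produces exactly a unit Dirac mass when $k=j$ and zero at the other admissible indices.
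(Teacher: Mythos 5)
Your plan diverges from the paper's proof, which is much shorter: the paper simply convolves $E_j$ with $\check\vphi$ for $\vphi\in\mathcal{D}(\R^n)$, invokes Edenhofer's Satz~3 to conclude that $E_j*\check\vphi$ solves the \emph{classical} Dirichlet problem with smooth boundary data $\check\vphi\,\delta_{jk}$, and then reads off the distributional statement. You instead propose to verify the three defining properties of $E_j$ directly, which is a genuinely self-contained route but requires more work.

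There is, however, a real gap in your polyharmonicity argument. The claim that ``$y^m$ times a harmonic function is polyharmonic of order $\le m$'' is false: if $h$ is harmonic then, using your Leibniz-type formula, $(\Delta_n+\partial_y^2)^2(y^2 h)=8\,\partial_y^2 h$, which need not vanish (take $h=x_1^2-y^2$), so $y^2 h$ is in general triharmonic, not biharmonic. The correct Almansi-type statement on the half-space is that $y^k h$ with $h$ harmonic is polyharmonic of order $\le k+1$, so $y^m h$ would only give order $m+1$. What saves $E_j$ is precisely the relation $r^{-(n+1)} = -\frac{1}{(n-1)y}\,\partial_y r^{-(n-1)}$ that you quote but do not exploit: the reciprocal power of $y$ it introduces is consumed by the prefactor $y^m$, so after iterating $(-\partial_y)^{m-j-1}$ one obtains a sum of terms $y^{m-a}h_a$ with $a\ge 1$ and $h_a$ harmonic, i.e.\ an Almansi decomposition with top power $y^{m-1}$, which is $m$-polyharmonic. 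Without writing out this book-keeping your claim is incorrect as stated. A similar caveat applies to the boundary data: showing that the trace of $\partial_y^k E_j$ is \emph{supported} at $x=0$ does not by itself give a multiple of $\delta(x)$ (it could a priori be a derivative of $\delta$), and the Beta-function scaling identity you quote covers only one type of Leibniz term; one has to track them all and match orders. Your alternative route via the partial Fourier transform in $x$, reducing to the ODE $(\partial_y^2-|\zeta|^2)^m\hat E_j=0$ with polynomial-times-$e^{-|\zeta|y}$ solutions, is cleaner and avoids both issues, and would have been a sounder main argument; the plan as written would need the Almansi book-keeping to be carried out in full.
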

\begin{proof}
Let $\vphi\in \D(\R^n_x)$, $\check{\vphi}(x)=\vphi(-x)$, and denote by $*$ the convolution with respect to the
$x$-variables. Then it follows from \cite[Satz 3]{E} that $E_j*\check{\vphi}\in \mathcal{E}_{xy}(\half)$ 
is the unique solution to  
\begin{align*}
(\Delta_n+\partial_y^2)^m(E_j*\check{\vphi}) &= 0 \\
\lim_{y\searrow 0} \partial_y^k(E_j*\check{\vphi})(x) &= \check{\vphi}(x)\delta_{jk}, \quad k=0,1,\dots,m-1.
\end{align*}
Consequently, $(\Delta_n+\partial_y^2)^m E_j=0$ and $\lim_{y\searrow 0} \partial_y^k\langle E_j,\vphi\rangle = \vphi(0)\delta_{jk}$,
$k=0,1,\dots, m-1$, i.e., $\lim_{y\searrow 0} \partial_y^k  E_j  = \delta(x)\delta_{jk}$, $k=0,1,\dots, m-1$.
\end{proof}
\begin{remark}
We point out the following particular cases of Proposition \ref{Edenhofer}:
\begin{itemize}
\item [(a)] For $m=1$, $j=0$ we obtain the well-known Poisson kernel of the Dirichlet-problem for $\Delta_n+\partial_y^2$
in the half-space $y>0$ to be
\[
\frac{\Gamma\big(\frac{n+1}{2}\big)}{\pi^{\frac{n+1}{2}}} \frac{y}{(|x|^2 + y^2)^{\frac{n+1}{2}}},
\]
cf., e.g., \cite[(1.2), p.\ 163]{Shi} or \cite[p.\ 37, Th.\ 14]{Evans}.
\item[(b)] The choice $m=2$, $j=0$, $j=1$ gives the Poisson kernels of the Dirichlet problem for the \emph{biharmonic operator}
$(\Delta_n+\partial_y^2)^2$ in the half-space $y>0$: 
\[
E_0 = \frac{2\Gamma\big(\frac{n+3}{2}\big)}{\pi^{\frac{n+1}{2}}} \frac{y^3}{(|x|^2 + y^2)^{\frac{n+3}{2}}}, \quad
E_1 = \frac{\Gamma\big(\frac{n+1}{2}\big)}{\pi^{\frac{n+1}{2}}} \frac{y^2}{(|x|^2 + y^2)^{\frac{n+1}{2}}}.
\]
In \cite{E}, J.~Edenhofer cites \cite{BF} for this result.

The solution $u$ to 
\begin{align*}
(\partial_x^2+\partial_y^2)^2u &= 0 \quad \text{ in } y>0\\
u|_{y=0} &= g_0, \ \partial_y u|_{y=0} = g_1 \quad (n=1,\ m=2,\ j=0,1),
\end{align*}
in the form
\[
u(x,y) = \frac{2y^3}{\pi} \int_\R g_0(x-\xi) \frac{d\xi}{(\xi^2+y^2)^2} + \frac{y^2}{\pi} \int_\R g_1(x-\xi) \frac{d\xi}{\xi^2+y^2}
\]
can be found in \cite[(2.14), p.\ 262]{P} (where a sign should be corrected and where the formula is attributed to
L.\ F.\ Richardson in \cite{Richardson}). For a more recent, direct treatment
of the Poisson kernel $E_0$ for the biharmonic operator in the half-plane we refer to \cite[p.\ 781]{A}. 
\item[(c)] If $n=2$, the Poisson kernel $E_{m-1}$ of the Dirichlet problem for $(\partial_x^2 + \partial_y^2)^m$ in $y>0$
(i.e., with the boundary conditions $\partial_y^k E_{m-1}|_{y=0} = 0$, $k=0,1,\dots,m-2$, $\partial_y^{m-1}E_{m-1}|_{y=0} = \delta(x)$)
is given by the formula
\[
E_{m-1} = \frac{1}{\pi(m-1)!} \frac{y^m}{x^2+y^2},
\]
see Example 5 in \cite[p.\ 275]{Sh}. 
\end{itemize}
\end{remark}
Next, let us derive the Poisson kernel of the $j$-th Dirichlet problem, $j=0,1,\dots,m-1$, of the operator $(\Delta_n+\partial_y^2-\xi^2)^m$
in $\half$ by Fourier transformation.
\begin{proposition}\label{prop:2}
Let $m, n\in \N$. The \emph{Poisson kernel} of the $j$-th Dirichlet problem, $j=0,1,\dots,m-1$, for the \emph{iterated meta-harmonic operator}
$(\Delta_n+\partial_y^2-\xi^2)^m$ in $\half$, $\xi\not=0$, is given by
\begin{equation}\label{2.1}
E_j = \frac{y^m |\xi|^{\frac{n+1}{2}}}{2^{\frac{n-1}{2}}\pi^{\frac{n+1}{2}}j!(m-1-j)!}(-\partial_y)^{m-j-1}
\left(\frac{K_{\frac{n+1}{2}}(|\xi|\sqrt{|x|^2+y^2})}{(|x|^2+y^2)^{\frac{n+1}{4}}}\right)
\end{equation}
or
\begin{equation}\label{2.2}
E_j = \frac{-y^m |\xi|^{\frac{n-1}{2}}}{2^{\frac{n-1}{2}}\pi^{\frac{n+1}{2}}j!(m-1-j)!}
(-\partial_y)^{m-j-1}\left(\frac{1}{y}\partial_y\right)
\left(\frac{K_{\frac{n-1}{2}}(|\xi|\sqrt{|x|^2+y^2})}{(|x|^2+y^2)^{\frac{n-1}{4}}}\right)
\end{equation}
Here, $K_\lambda$ is the modified Bessel function of the second kind of order $\lambda$.
\end{proposition}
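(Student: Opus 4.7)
The plan is to apply the partial Fourier transform $\mathcal{F}_x$ to convert the Dirichlet problem into a family of ODE problems in $y$ parametrized by $\eta\in\R^n$, to use Proposition \ref{Edenhofer} in Fourier representation in order to read off the correct candidate kernel, and then to invert the Fourier transform via a classical Bessel-function identity.

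First I would Fourier-transform Proposition \ref{Edenhofer} in $x$. Using $K_{1/2}(z)=\sqrt{\pi/(2z)}\,e^{-z}$, the standard Fourier transform of a radial function yields
\[
\mathcal{F}_x\Bigl[\tfrac{1}{(|x|^2+y^2)^{(n+1)/2}}\Bigr](\eta)=\tfrac{\omega_{n+1}}{2}\cdot\tfrac{e^{-y|\eta|}}{y},
\]
so the Fourier-transformed polyharmonic Poisson kernel takes the concise form $\tfrac{y^m}{j!(m-j-1)!}(-\partial_y)^{m-j-1}\bigl[e^{-y|\eta|}/y\bigr]$. Replacing $|\eta|$ by $\alpha\coleq\sqrt{|\eta|^2+\xi^2}$ throughout gives the natural candidate
\[
\hat E_j(\eta,y)=\tfrac{y^m}{j!(m-j-1)!}(-\partial_y)^{m-j-1}\bigl[e^{-y\alpha}/y\bigr]
\]
for the Fourier transform of the metaharmonic Poisson kernel.

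Next I would verify that this candidate solves the transformed problem $(\partial_y^2-\alpha^2)^m\hat E_j=0$ with $\partial_y^k\hat E_j|_{y=0}=\delta_{jk}$ for $k=0,\dots,m-1$. A direct Leibniz expansion rewrites $\hat E_j$ as $\tfrac{y^j}{j!}\,e_{m-j-1}(\alpha y)\,e^{-\alpha y}$, where $e_N(z)=\sum_{s=0}^Nz^s/s!$ is the truncated exponential series; the boundary conditions can be read off from the Taylor expansion at $y=0$ (the product $e_{m-j-1}(\alpha y)\,e^{-\alpha y}$ agrees with $1$ to order $m-j-1$), and the PDE follows from the identity $(\partial_y^2-\alpha^2)^m[P(y)e^{-\alpha y}]=\partial_y^m(\partial_y-2\alpha)^m P(y)\cdot e^{-\alpha y}$ applied to a polynomial $P$ of degree at most $m-1$.

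Finally, to return to $x$-space I would invoke the inverse Fourier identity
\[
\mathcal{F}^{-1}_\eta\Bigl[\tfrac{e^{-y\sqrt{|\eta|^2+\xi^2}}}{y}\Bigr](x)=\tfrac{|\xi|^{(n+1)/2}}{2^{(n-1)/2}\pi^{(n+1)/2}}\cdot\tfrac{K_{(n+1)/2}(|\xi|\sqrt{|x|^2+y^2})}{(|x|^2+y^2)^{(n+1)/4}},
\]
which I would derive from the Bochner subordination $e^{-y\alpha}=\int_0^\infty \tfrac{y}{2\sqrt{\pi s^3}}\,e^{-y^2/(4s)-s\alpha^2}\,ds$, the Gaussian Fourier transform, and the integral representation $K_\nu(z)=\tfrac12(z/2)^\nu\!\int_0^\infty t^{-\nu-1}e^{-t-z^2/(4t)}\,dt$. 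Substituting yields \eqref{2.1}. Formula \eqref{2.2} is then obtained from the Bessel recurrence $\tfrac{d}{dr}[r^{-\nu}K_\nu(ar)]=-ar^{-\nu}K_{\nu+1}(ar)$ combined with the chain-rule identity $\tfrac{1}{y}\partial_y f(r)=\tfrac{1}{r}\partial_r f(r)$ valid for $r=\sqrt{|x|^2+y^2}$, applied with $\nu=(n-1)/2$. The main obstacle is establishing (or locating in tables) the displayed inverse Fourier identity; once this is in hand the rest reduces to bookkeeping.
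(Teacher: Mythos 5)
Your approach is correct, but it differs genuinely from the paper's. The paper works by \emph{adding} a spatial dimension: it invokes Proposition~\ref{Edenhofer} with $n+1$ in place of $n$ to form the polyharmonic kernel $F_j$ on the half-space in $\R^{n+2}_{x,s,y}$, then takes a one-dimensional partial Fourier transform in the auxiliary variable $s$. Since $\mathcal{F}_s$ converts $\partial_s^2$ to $-\xi^2$, the required PDE and boundary conditions for $E_j = \mathcal{F}_s F_j$ are inherited immediately from those of $F_j$, with no further verification; the explicit Bessel formula then drops out of the single table entry \cite[8.432,5]{GR}. You instead Fourier-transform in the \emph{tangential} variables $x\in\R^n$, reduce to an ODE in $y$, guess the candidate $\hat E_j(\eta,y)=\frac{y^m}{j!(m-j-1)!}(-\partial_y)^{m-j-1}\bigl[e^{-y\alpha}/y\bigr]$ with $\alpha=\sqrt{|\eta|^2+\xi^2}$ by analogy with the transformed polyharmonic kernel, verify the boundary data from the Leibniz expansion $\hat E_j = \tfrac{y^j}{j!}e_{m-j-1}(\alpha y)e^{-\alpha y}$ and the PDE from the factorization $(\partial_y^2-\alpha^2)^m[Pe^{-\alpha y}]=\bigl(\partial_y^m(\partial_y-2\alpha)^m P\bigr)e^{-\alpha y}$, and finally invert the $n$-dimensional Fourier transform via Bochner subordination. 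Both routes are sound; yours is more elementary and self-contained (the ODE verification is purely algebraic and the Bessel inversion is derived rather than cited), while the paper's descent trick is shorter because it outsources the solution-checking entirely to Proposition~\ref{Edenhofer} and uses only a one-dimensional Fourier integral. One small remark: the definition of Poisson kernel in the paper also imposes $E_j\in\mathcal{O}_M(\half)$, and neither your sketch nor the paper's proof verifies this growth condition explicitly -- it follows from the exponential decay of $K_\nu$, but would be worth a sentence.
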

\begin{proof}
Setting
\begin{align*}
F_j &:=\frac{2}{\omega_{n+2}}\frac{y^m}{j!(m-1-j)!}(-\partial_y)^{m-j-1} \frac{1}{(|x|^2+y^2+s^2)^{\frac{n}{2}+1}}\\
&=-\frac{2}{n\omega_{n+2}}\frac{y^m}{j!(m-1-j)!}(-\partial_y)^{m-j-1}\left(\frac{1}{y}\partial_y\right) 
\frac{1}{(|x|^2+y^2+s^2)^{\frac{n}{2}}},
\end{align*}
we obtain by means of Proposition \ref{Edenhofer}:
\begin{align*}
(\Delta_n+\partial_y^2+\partial_s^2)^m F_j &=0 \\
\partial_y^k F_j|_{y=0} &= \delta(x,s)\delta_{jk}, \quad k=0,1,\dots,m-1.
\end{align*}
A partial Fourier transformation with respect to $s$ yields for $E_j=\int_\R e^{-i\xi s}F_j\,ds$:
\begin{align*}
(\Delta_n+\partial_y^2-\xi^2)^m E_j &=0 \\
\partial_y^k E_j|_{y=0} &= \delta(x)\delta_{jk}, \quad k=0,1,\dots,m-1.
\end{align*}
By \cite[8.432,5]{GR} we obtain
\begin{align*}
E_j &= \frac{\Gamma(\frac{n}{2}+1)}{\pi^{\frac{n}{2}+1}}\frac{y^m}{j!(m-1-j)!}(-\partial_y)^{m-j-1}
\left(\frac{2\sqrt{\pi}|\xi|^{\frac{n+1}{2}}}{2^{\frac{n+1}{2}}\Gamma(\frac{n}{2}+1)}
\frac{K_{\frac{n+1}{2}}(|\xi|\sqrt{|x|^2+y^2})}{(|x|^2+y^2)^{\frac{n+1}{4}}}\right)\\ 
&= \frac{y^m |\xi|^{\frac{n+1}{2}}}{2^{\frac{n-1}{2}}\pi^{\frac{n+1}{2}}j!(m-1-j)!}(-\partial_y)^{m-j-1}
\left(\frac{K_{\frac{n+1}{2}}(|\xi|\sqrt{|x|^2+y^2})}{(|x|^2+y^2)^{\frac{n+1}{4}}}\right),
\end{align*}
establishing \eqref{2.1}. The second claim follows from the functional relation
$(\frac{1}{z}\partial_z)(z^{-\lambda}K_\lambda(z))=-z^{-\lambda-1}K_{\lambda+1}(z)$ (cf.\ \cite[8.486,15]{GR}):
\begin{align*}
E_j &= \frac{-\Gamma(\frac{n}{2}+1)y^m}{n\pi^{\frac{n}{2}+1}j!(m-1-j)!}(-\partial_y)^{m-j-1}
\left(\frac{1}{y}\partial_y\right)
\left(\frac{2\sqrt{\pi}|\xi|^{\frac{n-1}{2}}}{2^{\frac{n-1}{2}}\Gamma(\frac{n}{2})}
\frac{K_{\frac{n-1}{2}}(|\xi|\sqrt{|x|^2+y^2})}{(|x|^2+y^2)^{\frac{n-1}{4}}}\right)\\ 
&= \frac{-y^m |\xi|^{\frac{n-1}{2}}}{2^{\frac{n-1}{2}}\pi^{\frac{n+1}{2}}j!(m-1-j)!}(-\partial_y)^{m-j-1}\left(\frac{1}{y}\partial_y\right)
\left(\frac{K_{\frac{n-1}{2}}(|\xi|\sqrt{|x|^2+y^2})}{(|x|^2+y^2)^{\frac{n-1}{4}}}\right).
\end{align*}
\end{proof}
\begin{remark}
Let us mention a particular case of Proposition \ref{prop:2}: Setting $m=1$, $j=0$, the Poisson kernel $E_0$ of the
\emph{metaharmonic operator} $\Delta_n  + \partial_y^2 - \xi^2$ in $\half$ is given by 
\[
E_0 = \frac{y |\xi|^{\frac{n+1}{2}}}{2^{\frac{n-1}{2}}\pi^{\frac{n+1}{2}}}
\frac{K_{\frac{n+1}{2}}\left(|\xi|\sqrt{|x|^2+y^2}\right)}{(|x|^2+y^2)^{\frac{n+1}{4}}},
\]
see \cite[Rem.\ 2, p.\ 321]{BD}.
\end{remark}
Proposition \ref{prop:2} remains valid if we substitute $p\in T^\Gamma = \R_+ + i\R$ ($\Gamma=\R_+=(0,\infty)$, $T^\Gamma$ the 
right half-plane) for $\xi\in \R \setminus \{0\}$. We obtain by analytic continuation:
\begin{proposition}\label{prop:2'}
The Poisson kernel $E_j$ of the $j$-th Dirchlet problem, $j=0,1,\dots,m-1$, of the \emph{iterated meta-harmonic operator}
$(\Delta_n+\partial_y^2-p^2)^m$, $p\in T^\Gamma$, in the half-space $\half$ is given by
\[
E_j = \frac{-y^m p^{\frac{n-1}{2}}}{2^{\frac{n-1}{2}}\pi^{\frac{n+1}{2}}j!(m-1-j)!}
(-\partial_y)^{m-j-1}\left(\frac{1}{y}\partial_y\right)
\left(\frac{K_{\frac{n-1}{2}}(p\sqrt{|x|^2+y^2})}{(|x|^2+y^2)^{\frac{n-1}{4}}}\right)
\]
(For even $n$ the square root in $p^{\frac{n-1}{2}}$ is defined as usual.) Furthermore, 
$E_j\in \mathcal{H}(T^\Gamma_p)(\mathcal{D}'(\half_{xy}))$.
\end{proposition}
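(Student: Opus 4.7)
The plan is to interpret the stated formula as defining a distribution-valued holomorphic function $p \mapsto E_j(p)$ on $T^\Gamma$ and to obtain the Poisson-kernel identities by analytic continuation from the real half-line $\R_+$, where Proposition \ref{prop:2} already provides them.

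First I would verify holomorphy together with distributional values. For $p \in T^\Gamma$ and $(x,y) \in \half$ the quantity $p\sqrt{|x|^2+y^2}$ lies strictly in the open right half-plane, where $K_{(n-1)/2}$ is holomorphic. The prefactor $p^{(n-1)/2}$ is holomorphic on $T^\Gamma$ in the principal branch (which agrees with the usual square root on $\R_+$). Applying the operator $y^m(-\partial_y)^{m-j-1}(y^{-1}\partial_y)$ preserves smoothness in $(x,y) \in \half$ and holomorphy in $p$. Hence $E_j \in \mathcal{H}(T^\Gamma_p)(\mathcal{E}(\half_{xy}))$, which embeds continuously into $\mathcal{H}(T^\Gamma_p)(\mathcal{D}'(\half_{xy}))$; this establishes the last assertion of the proposition.

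To identify $E_j(p)$ as the Poisson kernel, I would invoke equation \eqref{2.2} of Proposition \ref{prop:2}: for $p = \xi \in \R_+$ the displayed formula agrees verbatim with the Poisson kernel of $(\Delta_n + \partial_y^2 - \xi^2)^m$. Consequently the map
\[
p \mapsto (\Delta_n + \partial_y^2 - p^2)^m E_j(p)
\]
is a $\mathcal{D}'(\half)$-valued holomorphic function on $T^\Gamma$ that vanishes on $\R_+$. By the identity theorem for vector-valued holomorphic functions (which, via pairing with test functions, reduces to the scalar statement), it vanishes on all of $T^\Gamma$. An analogous analytic continuation argument for the boundary traces yields $\partial_y^k E_j(p)|_{y=0} = \delta(x)\delta_{jk}$ on $T^\Gamma$, since this holds for $p \in \R_+$ by Proposition \ref{prop:2} and the right-hand side is $p$-independent.

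The main delicate step is justifying that the traces $\partial_y^k E_j(p)|_{y=0}$ exist as distributions in $x$ and depend holomorphically on $p$. As for Proposition \ref{Edenhofer}, this reduces to inspecting the explicit $y$-dependence of the formula: the overall factor $y^m$ provides sufficient vanishing, the remaining factors are smooth down to $y=0$ away from $x=0$, and by the trace theorem \cite[Theorem 4.4.8, p.~115]{lars1} the resulting boundary values are well-defined distributions. Holomorphic dependence on $p$ is then inherited directly from the formula, closing the argument.
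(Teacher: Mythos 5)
Your analytic-continuation argument for the PDE and the boundary conditions is sound and indeed spells out what the paper only states informally in the sentence preceding the proposition ("Proposition \ref{prop:2} remains valid if we substitute \dots\ We obtain by analytic continuation"). That part of your proposal is fine and arguably more explicit than the paper.

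The genuine gap is the step ``Hence $E_j \in \mathcal{H}(T^\Gamma_p)(\mathcal{E}(\half_{xy}))$.'' Pointwise holomorphy in $p$ and smoothness in $(x,y)$ on their own do not place $E_j$ in that space. First, $\mathcal{H}(T^\Gamma)$ is not the space of all holomorphic functions on the tube; it is the projective limit over compact $K\subset\Gamma$ of the inductive limits of spaces with polynomial-growth bounds on $T^K$, so membership requires quantitative estimates uniform in $p$ over tube domains. Second, and more to the point, $\mathcal{H}(T^\Gamma_p)(\mathcal{D}'(\half_{xy}))$ (resp.\ $\mathcal{H}(T^\Gamma_p)(\mathcal{E}(\half_{xy}))$) is a completed $\eps$-tensor product, and to recognize a concrete kernel as an element of it you must verify that scalar pairings $\varphi\mapsto\langle\varphi,E_j(p,\cdot)\rangle$ land in $\mathcal{H}(T^\Gamma_p)$ with the requisite continuity, and that this scalar test suffices to characterize membership. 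This last point is exactly what the paper's $\eps$-property machinery is for: it proves two auxiliary lemmas (that complete nuclear normal spaces of distributions have the $\eps$-property, via the K\=omura embedding theorem, and hence that $\mathcal{H}(T^\Gamma_p)\hat\otimes\mathcal{D}'(\half_{xy})$ has it), expresses the quotient $K_{(n-1)/2}(p\sqrt{|x|^2+y^2})/(|x|^2+y^2)^{(n-1)/4}$ via the Bessel integral representation as a vector-valued scalar product over $L^\infty(\R_{+,t})\times L^1(\R_{+,t})(\,\cdot\,)$, checks via Fubini and dominated convergence that the integrand is in the correct vector-valued space, and finally invokes the nuclearity of $\mathcal{H}(T^\Gamma)\cong\mathcal{O}_C'(\Gamma)$ and of its strong dual (Lemma \ref{lem}) together with a vector-valued integration theorem to conclude. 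In short, the entire body of the paper's proof is devoted to the membership claim that you assert in one word (``Hence''), so your sketch is not a proof of the hard part of the statement. You would need to supply the growth estimates over $T^K$ and justify why they imply membership in the $\eps$-tensor product --- either by reproducing the paper's $\eps$-property argument, or by an explicit verification that $\varphi\mapsto\langle\varphi,E_j\rangle$ is a continuous map $\mathcal{D}(\half)\to\mathcal{H}(T^\Gamma_p)$ together with a density/nuclearity argument identifying such maps with elements of $\mathcal{H}(T^\Gamma_p)(\mathcal{D}'(\half_{xy}))$.
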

\begin{proof}
The integral representation
\[
\frac{K_{\frac{n-1}{2}}(p\sqrt{|x|^2+y^2})}{(|x|^2+y^2)^{\frac{n-1}{4}}} = \frac{1}{2}\int_0^\infty t^{-\frac{n+1}{2}}
e^{-\frac{p}{2}\left(t+\frac{|x|^2+y^2}{t}\right)}\,dt
\]
in \cite[8.432,7]{GR} can be interpreted as a vector-valued scalar product 
\[
\frac{1}{2} \left\langle 1(t), t^{-\frac{n+1}{2}} e^{-\frac{p}{2}t}\cdot e^{-\frac{p}{2t}\left(|x|^2+y^2\right)} \right\rangle
\]
on $L^\infty(\R_{+,t})\times L^1(\R_{+,t})(\mathcal{H}(T^\Gamma_p)(\mathcal{D}'(\half_{xy})))$. Here, $e^{-\frac{p}{2}t}\in 
\mathcal{H}(T^\Gamma_p)(L^1(\R_{+,t}))$, and 
\begin{align*}
S(p,t,x,y):=
e^{-\frac{p}{2t}\left(|x|^2+y^2\right)}t^{-\frac{n+1}{2}}
&\in \mathcal{H}(T^\Gamma_p)(\mathcal{C}_0(\R_{+,t})(\mathcal{D}'(\half_{xy}))) \\
& = \mathcal{H}(T^\Gamma_p) \hat\otimes_\eps \mathcal{C}_0(\R_{+,t}) \hat\otimes_\eps \mathcal{D}'(\half_{xy}).
\end{align*}
To prove this, we first show the following two auxiliary results:

\begin{lemma}\label{Mai2018}Any complete, nuclear normal space of distributions has the $\eps$-property.
\end{lemma}
\begin{proof}
By the K\=omura Theorem \cite[21.7.1, p.~500]{J}, any such space $F$ is isomorphic to a closed subspace of $s^J$ for some index set $J$. Since the $\eps$-property is preserved under taking products and subspaces, this implies that $F$ has the $\eps$-property.
\end{proof}

\begin{lemma}
$\mathcal{H}(T^\Gamma_p) \hat\otimes \mathcal{D}'(\half_{xy})$ has the $\eps$-property.
\end{lemma}
\begin{proof}
We have $\mathcal{H}(T^\Gamma) \subseteq \mathcal{E}(\Gamma\times\R^n)$, with the topology induced by $\mathcal{E}$.
Both $\mathcal{E}$ and $\mathcal{D}$ are nuclear normal spaces of distributions, so $\mathcal{E}\hat\otimes \mathcal{D}'$
is nuclear, normal, and complete and has the $\eps$-property by Lemma \ref{Mai2018}. As the $\eps$-property is inherited by topological subspaces, the claim follows.
\end{proof}
To establish $S(p,t,x,y)\in (\mathcal{H}(T^\Gamma_p) \hat\otimes \mathcal{D}'(\half_{xy}))(\mathcal{C}_0(\R_{+,t}))$ it therefore
suffices to show that for $\mu\in \mathcal{M}^1(\R_{+,t})$ we have
\[
\langle S(p,t,x,y),\mu(t) \rangle \in \mathcal{H}(T^\Gamma_p) \hat\otimes \mathcal{D}'(\half_{xy}).
\]
Noting that $\mathcal{H}(T^\Gamma_p)$ has the $\eps$-property, to see this it suffices in turn to show that
\begin{equation}\label{star}
\langle \vphi(x,y), \langle S(p,t,x,y),\mu(t)\rangle\rangle \in \mathcal{H}(T^\Gamma_p).
\end{equation}
for each $\vphi\in \mathcal{D}(\half_{xy})$. By Fubini's theorem (\cite[p.\ 131, Corollaire]{SCH1}) this
is equivalent to
\begin{equation*}
\langle \langle S(p,t,x,y),\vphi(x,y)\rangle, \mu(t)\rangle \in \mathcal{H}(T^\Gamma_p).
\end{equation*}
In fact,
\begin{align*}
\langle t^{-\frac{n+1}{2}}e^{-\frac{p}{2t}\left(|x|^2+y^2\right)},\vphi(x,y)\rangle &= \int_{\half_{xy}\times \R_{+,t}}
e^{-\frac{p}{2t}\left(|x|^2+y^2\right)}t^{-\frac{n+1}{2}}\vphi(x,y)\,dxdy \\ 
&= 
\int_{\half_{xy}\times \R_{+,t}}
e^{-\frac{p}{2}\left(|\xi|^2+\eta^2\right)}\vphi(\sqrt{t}\xi,\sqrt{t}\eta)\,d\xi d\eta,
\end{align*}
so that
\[
\langle \mu(t), \langle S(p,t,x,y),\vphi(x,y)\rangle\rangle = \int_{\half_{xy}\times \R_{+,t}}
e^{-\frac{p}{2}\left(|\xi|^2+\eta^2\right)}\langle \mu(t),\vphi(\sqrt{t}\xi,\sqrt{t}\eta)\rangle\,d\xi d\eta.
\]
Since the map $\half \to \C$, $(\xi,\eta)\mapsto \langle\mu(t),\vphi(\sqrt{t}\xi,\sqrt{t}\eta)\rangle$
is bounded by $\|\mu\|_1 \|\vphi\|_\infty$, \eqref{star} follows by dominated convergence.
Note that this argument in fact also shows that  
$S(p,t,x,y)\in \mathcal{H}(T^\Gamma_p)(\mathcal{B}\mathcal{C}_b(\R_{+,t})(L^1(\half_{xy})))$.
Here, the subscript $b$ refers to the Buck topology, so $\mathcal{B}\mathcal{C}_b(\R_{+})' = \mathcal{M}(\R_+)$ 
(cf. \cite[p.\ 6]{OW13}).

Due to the continuity of the bilinear multiplication $\mathcal{H}(T^\Gamma)\times \mathcal{H}(T^\Gamma) \stackrel{\cdot}{\rightarrow} 
\mathcal{H}(T^\Gamma)$ and the continuity of the vector-valued multiplication $\mathcal{C}_0(\R_{+,t})(\mathcal{D}'(\half_{xy}))\times
L^1(\R_{+,t}) \stackrel{\cdot}{\rightarrow} L^1(\R_{+,t})(\mathcal{D}'(\half_{xy}))$ we conclude by means of \cite[Proposition 3, p.\ 37]{SCH2} 
that
\[
e^{-\frac{pt}{2} -\frac{p}{2t}\left(|x|^2+y^2\right)} t^{-\frac{n+1}{2}} \in 
 \mathcal{H}(T^\Gamma_p)(L^1(\R_{+,t})(\mathcal{D}'(\half_{xy})).
\]
Indeed, the assumptions `$\mathcal{H}(T^\Gamma)$ nuclear' and `$(\mathcal{H}(T^\Gamma))_b'$ nuclear' are fulfilled
because of $\mathcal{H}(T^\Gamma)\cong \mathcal{O}_C'(\Gamma)$ (Definition and Theorem \ref{defthm2}) and Lemma \ref{lem}.

In virtue of
\[
\mathcal{H}(T^\Gamma_p)(L^1(\R_{+,t})(\mathcal{D}'(\half_{xy}))) = L^1(\R_{+,t})(\mathcal{H}(T^\Gamma_p)(\mathcal{D}'(\half_{xy}))),
\]
the final step consists in applying \cite[Theorem 7.1, p.\ 31]{mixed} to the vector-valued scalar product
$\langle \, , \,\rangle : L^\infty \times L^1(E)\to E$, with $E=\mathcal{H}(T^\Gamma_p)(\mathcal{D}'(\half_{xy}))$.
\end{proof}

\section{Transient response Dirichlet problems}
In this section we study the transient response Dirichlet problem for the iterated wave operator $(\Delta_n+\partial_y^2-\partial_t^2)^m$
and the iterated Klein-Gordon operator $(\Delta_n+\partial_y^2-\partial_t^2-\xi^2)^m$ in the half-space $y>0$, more precisely
in $\half_{yt} = \{(x,y,t)\in \R^{n+2}: y>0,\ t>0 \}$.

We look for an explicit expression for the solution $E_j$ to the $j$-th, $j=0,1,\dots, m-1$, (mixed) Cauchy-Dirichlet problem
\begin{align*}
&(\Delta_n + \partial_y^2 - \partial_t^2 - \xi^2)^m E_j = 0 \qquad \text{ in } \mathcal{D}'(\half_{yt}) \\
&E_j|_{t=0} = \partial_tE_j|_{t=0} = \dots =\partial^{2m-1}_tE_j|_{t=0} = 0\ \text{ in } \mathcal{D}'(\half_1)\\
&\partial_y^k E_j|_{y=0} = \delta(x,t)\delta_{jk}, \quad k=0,\dots,m-1, \ \text{ in } \mathcal{D}'(\half_2),
\end{align*}
where $\half_1=\{(x,y)\in \R^{n+1} : y>0 \}$, $\half_2=\{(x,t)\in \R^{n+1} : t>0 \}$.

For the general theory of the mixed problem for constant coefficient, linear partial differential operators
see \cite[12.9, p.\ 162--179]{Ho2} and \cite[p.\ 57--118]{Sa}. We call $E_j$ \emph{Poisson kernel} of the 
Cauchy-Dirichlet problem for the iterated wave operator and the iterated Klein-Gordon-operator if $\xi=0$
or $\xi\not=0$, respectively (\cite[p.\ 94]{Sa}).
\begin{proposition}\label{prop:3}
The \emph{Poisson kernel} $E_j$, $j=0,1,\dots,m-1$, of the \emph{Cauchy-Dirichlet problem for the iterated 
wave operator} $(\Delta_n+\partial_y^2-\partial_t^2)^m$ in the half space $\half_{yt}$ is given by 
\[
E_j = \frac{-y^m }{2^{n-1}\pi^{\frac{n}{2}}\Gamma(\frac{n}{2}) j!(m-1-j)!}
(-\partial_y)^{m-j-1}\left(\frac{1}{y}\partial_y\right)
\left(\frac{\partial_t^{n-1}(t^2-|x|^2-y^2)_{+}^{\frac{n}{2}-1}}{(|x|^2+y^2)^{\frac{n-1}{2}}}{Y(t)} \right),
\]
where $x_+^\lambda:=x^\lambda Y(x)$. Furthermore, $E_j\in \mathcal{S}'(\R_{+,t})(\mathcal{D}'(\half_{1,xy}))$.
\end{proposition}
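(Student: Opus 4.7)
The plan is to invert, in the variable $t$, the metaharmonic Poisson kernel furnished by Proposition \ref{prop:2'}. The vanishing Cauchy data $\partial_t^k E_j|_{t=0}=0$, $k=0,\dots,2m-1$, turn $\sL_t(\Delta_n+\partial_y^2-\partial_t^2)^m E_j$ into $(\Delta_n+\partial_y^2-p^2)^m(\sL_t E_j)$, while the Dirichlet data at $y=0$ transform into $\partial_y^k(\sL_t E_j)|_{y=0}=\delta(x)\delta_{jk}$; consequently $\sL_t E_j$ must coincide with the kernel supplied by Proposition \ref{prop:2'}. The stated formula for $E_j$ will then follow by explicit Laplace inversion, and the membership $E_j\in\sS'(\R_{+,t})(\D'(\half_{1,xy}))$ from the Laplace isomorphism $\sL\colon\sO_C'(\Gamma)\to\sH(T^\Gamma)$ (Definition and Theorem \ref{defthm2}) combined with Theorem \ref{thm3}.

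The inversion rests on the identity (\cite[6.621.3]{GR})
\[
\sL\bigl\{(t^2-\alpha^2)_+^{\nu-1/2}\bigr\}(p) = \frac{(2\alpha)^\nu\,\Gamma(\nu+\tfrac12)}{\sqrt{\pi}}\cdot\frac{K_\nu(p\alpha)}{p^\nu},\qquad \operatorname{Re}\nu>-\tfrac12,\ \operatorname{Re}p>0,
\]
applied with $\nu=(n-1)/2$ and $\alpha=\sqrt{|x|^2+y^2}$. Since $(t^2-\alpha^2)_+^{n/2-1}$ is supported in $\{t\geq\alpha>0\}$, it and all its derivatives vanish near $t=0$, so $\sL\{\partial_t^{n-1}f\}=p^{n-1}\sL\{f\}$ introduces no boundary contributions. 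Rearranging yields
\[
\sL^{-1}\!\left\{\frac{p^{(n-1)/2}K_{(n-1)/2}(p\alpha)}{\alpha^{(n-1)/2}}\right\}(t) = \frac{\sqrt{\pi}}{2^{(n-1)/2}\Gamma(n/2)}\cdot\frac{\partial_t^{n-1}(t^2-\alpha^2)_+^{n/2-1}}{\alpha^{n-1}}\,Y(t).
\]
Substituting into the kernel of Proposition \ref{prop:2'} (the $y$-derivatives commute trivially with $\sL^{-1}$) and simplifying the overall constant via
\[
\frac{1}{2^{(n-1)/2}\pi^{(n+1)/2}}\cdot\frac{\sqrt{\pi}}{2^{(n-1)/2}\Gamma(n/2)} = \frac{1}{2^{n-1}\pi^{n/2}\Gamma(n/2)}
\]
reproduces precisely the formula stated in the proposition.

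The main obstacle is to elevate this scalar computation to the vector-valued level, that is, to identify the right-hand side above, whose $t$-support depends on $(x,y)$, as $\sL^{-1}$ of the holomorphic family $p\mapsto p^{(n-1)/2}K_{(n-1)/2}(p\alpha)/\alpha^{(n-1)/2}$ regarded as an element of $\sH(T^\Gamma_p)(\D'(\half_{1,xy}))$. I expect this to be accomplished by essentially the same $\varepsilon$-property argument used in the proof of Proposition \ref{prop:2'}: the integral representation \cite[8.432,7]{GR} of $K_{(n-1)/2}$ realises the kernel as a vector-valued scalar product on appropriate spaces, after which the $y$-differentiations and the inverse Laplace transform commute with this representation by virtue of Theorem \ref{thm3}. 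The vanishing Cauchy data then drop out of the support condition $\operatorname{supp} E_j\subseteq\{t\geq\sqrt{|x|^2+y^2}\}\subseteq\{t>0\}$, which also validates the relation $\sL_t(\partial_t^{2m}E_j)=p^{2m}\sL_t E_j$ invoked at the outset.
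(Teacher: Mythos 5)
Your plan is the paper's plan: invert the metaharmonic kernel of Proposition~\ref{prop:2'} by the Laplace transformation in $t$, use the transform pair relating $K_{(n-1)/2}$ to $(t^2-\alpha^2)_+^{n/2-1}$, pull $p^{n-1}$ out as $\partial_t^{n-1}$, simplify the constants, and lift to the vector-valued level by the $\eps$-property machinery of the previous proof. The constant simplification you display is exactly the one the paper performs, and the membership $E_j\in\sS'(\R_{+,t})(\D'(\half_{1,xy}))$ is read off from the Laplace isomorphism in both cases. (Your citation \cite[6.621.3]{GR} is slightly off --- the paper uses \cite[8.432,3]{GR} and the table of Colombo--Lavoine --- but the identity you write down is correct and equivalent.)

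The one place where you genuinely deviate is the verification of the vanishing Cauchy data $\partial_t^k E_j|_{t=0}=0$, $k=0,\dots,2m-1$. The paper first invokes \cite[Th.~12.9.12]{Ho2} to obtain $E_j\in\mathcal{C}^\infty([0,\infty),\D'(\half_1))$, making the traces meaningful, and then carries out an explicit computation (a homothety argument) to show they vanish. You instead appeal to the support condition $\operatorname{supp}E_j\subseteq\{t\geq\sqrt{|x|^2+y^2}\}$: for any $\vphi\in\D(\half_1)$ with $y\geq\eps$ on $\operatorname{supp}\vphi$, the pairing $\langle\vphi,E_j(t)\rangle$ vanishes for $0<t<\eps$, so every $t$-derivative at $t=0^+$ is zero. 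That is indeed a cleaner argument and does what the paper's explicit calculation does --- but it presupposes, just as the paper's argument does, that the trace $E_j|_{t=0}$ is well-defined, i.e.\ that $E_j$ is a $\D'(\half_1)$-valued $\mathcal{C}^{2m-1}$ (or better, $\mathcal{C}^\infty$) function of $t$ up to the boundary. You should say explicitly that this regularity comes from H\"ormander's theorem on mixed problems (or observe that the explicit formula shows $E_j\equiv 0$ on $\{0<t<\eps\}$ in $\D'(\half_1)$-valued sense for any $\eps$, which then yields the traces trivially). As stated, ``the vanishing Cauchy data drop out of the support condition'' compresses a step that deserves to be spelled out.

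Apart from that, your argument is sound and matches the paper's route; the support observation, once its precondition is supplied, is in fact a nice shortcut around the paper's homothety computation.
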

\begin{proof} Recall from Definition \ref{def1} that
\[
\mathcal{S}'(\R_{+,t}) := \bigcap_{\tau\in (0,\infty)} e^{\tau t}\mathcal{S}'_t,
\] 
so by Definition and Theorem \ref{defthm2}, the Laplace transform
\[
\sL: \mathcal{S}'(\R_{+,t})(\mathcal{D}'(\half_{xy})) \to \mathcal{H}(T^\Gamma_p)(\mathcal{D}'(\half_{xy}))
\]
is an isomorphism.

Thus the inverse Laplace transform $E_j:=\sL^{-1}F_j$ of the Poisson kernel $F_j$ of the $j$-th Dirichlet
problem in the half-space of the iterated metaharmonic operator in Proposition \ref{prop:2'} yields
\begin{align*}
&(\Delta_n + \partial_y^2 - \partial_t^2)^m E_j = 0  \\
&E_j|_{t=0} = \partial_tE_j|_{t=0} = \dots =\partial^{2m-1}_tE_j|_{t=0} = 0\\
&\partial_y^k E_j|_{y=0} = \delta(x,t)\delta_{jk}, \quad k=0,\dots,m-1, 
\end{align*}
and 
\[
E_j = \frac{-y^m }{2^{\frac{n-1}{2}}\pi^{\frac{n+1}{2}}j!(m-1-j)!}
(-\partial_y)^{m-j-1}\left(\frac{1}{y}\partial_y\right)
\left(\frac{\sL^{-1}\left(p^{\frac{n-1}{2}}K_{\frac{n-1}{2}}(p\sqrt{|x|^2+y^2})\right)}{(|x|^2+y^2)^{\frac{n-1}{4}}}\right).
\]
By \cite[(5.19)]{CL} we have for any $S\in \mathcal{H}(T^\Gamma)$: $\sL^{-1}(p^{n-1}S) = \partial_t^{n-1}\sL^{-1}S$.
Hence, we obtain by means of the transform pair
% \editodo{wie lässt sich die punkt\-weise Formel distributionell interpretieren?} in \cite[p.\ 121]{CL}:
\begin{align*}
\sL^{-1}\left(p^{\frac{n-1}{2}}K_{\frac{n-1}{2}}(p\sqrt{|x|^2+y^2})\right) &= \partial_t^{n-1}
\sL^{-1}\left(\frac{K_{\frac{n-1}{2}}(p\sqrt{|x|^2+y^2}))}{p^{\frac{n-1}{2}}}\right)\\
&= \partial_t^{n-1} \left(\frac{\sqrt{\pi}}{\Gamma(\frac{n}{2})} 
\frac{(t^2-|x|^2-y^2)_{+}^{\frac{n}{2}-1}}{(2\sqrt{|x|^2+y^2})^{\frac{n-1}{2}}} {Y(t)} \right).
\end{align*}
In fact, \cite[(5.19)]{CL} is the inverse relation of
\[
\left\langle 1(t), e^{-pt} \frac{(t^2-|x|^2-y^2)}{(|x|^2+y^2)^{\frac{n-1}{4}}} Y(t-\sqrt{|x|^2-y^2})\right\rangle
\frac{\sqrt{\pi}}{2^{\frac{n-1}{2}}\Gamma(n/2)} = \frac{K_{\frac{n-1}{2}}(p\sqrt{|x|^2+y^2})}{p^\frac{n-1}{2}}
\]
(which can be seen using \cite[8.432,3]{GR} or \cite[\S 6.15. (4), p.~172]{Watson}).
That this identity is in fact valid in $\mathcal{H}(T^\Gamma_p)\hat\otimes \mathcal{D}'(\half_{xy})$ can be concluded
similarly to the proof of Proposition \ref{prop:2'}.
This yields the formula stated in the Proposition. The fact that $E_j$ belongs to
$\mathcal{S}'(\R_{+,t})(\mathcal{D}'(\half_{1,xy}))$ now follows by inspection.

It remains to show that $\left.\partial_t^k E_j\right|_{t=0} = 0$ for $0\le k \le 2m-1$. For this we first note
that by \cite[Th.\ 12.9.12, p.~176]{Ho2} we have $E_j \in \mathcal{C}^\infty ([0,\infty),\mathcal{D}'(\half_{xy}))$, so
\[
\partial_t^k E_j(t) = \mathrm{const} \cdot y^m (-\partial_y)^{m-j-1} \left(\frac{1}{y}\partial_y\right) \partial_t^{k+n-1}
\frac{Y(t)(t^2-|x|^2-y^2)_+^{\frac{n}{2} - 1}}{(|x|^2+y^2)^{\frac{n-1}{2}}} \in \mathcal{D}'(\half_{xy})
\]
and for $\vphi \in \mathcal{D}(\half_{xy})$ we obtain
\begin{align*}
\langle \vphi, \partial_t^k E_j(t)\rangle &= \mathrm{const}\cdot  \partial_t^{k+n-1} Y(t) 
 \left\langle \partial_y \frac{1}{y} \partial_y^{m-j-1} (y^m\vphi),   
\frac{(t^2-|x|^2-y^2)_+^{\frac{n}{2} - 1}}{(|x|^2+y^2)^{\frac{n-1}{2}}}\right\rangle \\
&= \mathrm{const}\cdot \partial_t^{k+n-1} \left(Y(t) \int_{|x|^2+y^2\le t^2} \phi(x,y) 
\frac{(t^2-|x|^2-y^2)^{\frac{n}{2} - 1}}{(|x|^2+y^2)^{\frac{n-1}{2}}}\,dxdy\right),
\end{align*}
where $\phi(x,y):=\partial_y \frac{1}{y} \partial_y^{m-j-1} (y^m\vphi)$. Applying the homothety 
$x=t\xi$, $y=t\eta$, $t>0$ shows that the latter equals
\[
\mathrm{const}\cdot \partial_t^{k+n-1} \left(t_+^n \int_{|\xi|^2+\eta^2\le 1} \phi(t\xi,t\eta) 
\frac{(1-|\xi|^2-\eta^2)^{\frac{n}{2} - 1}}{(|\xi|^2+\eta^2)^{\frac{n-1}{2}}}\,dxdy\right),
\]
so
\begin{align*}
\lim_{t\searrow 0} \langle \vphi, \partial_t^k E_j(t)\rangle =  \mathrm{const}\cdot \int_{|\xi|^2+\eta^2\le 1}
\left[\lim_{t\searrow 0} \partial_t^{k+n-1} t_+^n \phi(t\xi,t\eta) \right] \cdot 
\frac{(1-|\xi|^2-\eta^2)^{\frac{n}{2} - 1}}{(|\xi|^2+\eta^2)^{\frac{n-1}{2}}}\, d\xi d\eta.
\end{align*}
As $\phi$ vanishes at $t=0$, together with all its derivatives, we indeed arrive at 
$\left.\partial_t^k E_j\right|_{t=0} = 0$ for all $k$.

\end{proof}
\begin{remark}
We single out two important special cases:
\begin{itemize}
\item[(a)] $n=1$, $m=1$, $j=0$:

The Poisson kernel of the mixed problem
\begin{align*}
&(\partial_x^2 + \partial_y^2 - \partial_t^2) E_0 = 0,\quad x\in \R,\ y>0,\ t>0, \\
&E_0|_{t=0} = \partial_tE_0|_{t=0} =  0\\
&E_0|_{y=0} = \delta(x,t) 
\end{align*}
in the half-space $y>0$ is given by 
\[
E_0=-\frac{1}{\pi} \partial_y \frac{Y(t)}{(t^2-x^2-y^2)_{+}^\frac{1}{2}} = -\frac{Y(t)}{\pi}\partial_y
\left((t^2-x^2-y^2)_+^{-\frac{1}{2}} \right).
\]
In \cite[Ex.\ 405, p.\ 189]{LSU} the solution $U$ to the mixed problem with the temporally constant boundary
value $U|_{y=0} = \delta(x)$ is presented. It emerges from $E_0$ by convolution with $\delta(x)\otimes Y(t)$, i.e.,
\begin{align*}
U =-\frac{Y(t)}{\pi}\partial_y \left((t^2-x^2-y^2)_+^{-\frac{1}{2}} \right) * (\delta(x)\otimes Y(t))
= \frac{yt_+}{\pi (x^2+y^2)(t^2-x^2-y^2)_+^{1/2}}.
\end{align*}
Note that our derivation differs essentially from that proposed in \cite{LSU}, where Fourier- and Laplace
transformation are suggested to be applied with respect to different variables.
\item[(b)] $n=2$, $m=1$, $j=0$:

The Poisson kernel of the mixed Cauchy-Dirichlet problem 
\begin{align*}
&(\Delta_2 + \partial_y^2 - \partial_t^2) E_0 = 0,\quad x\in \R^2,\ y>0,\ t>0, \\
&E_0|_{t=0} = \partial_tE_0|_{t=0} =  0\\
&E_0|_{y=0} = \delta(x,t) 
\end{align*}
in the half-space $y>0$ is given by 
\[
E_0 = \frac{-Y(t)}{2\pi}\partial_y\left( \frac{1}{\sqrt{|x|^2+y^2}} \partial_t (Y(t^2-|x|^2-y^2)) \right)
= \frac{-1}{2\pi t} \partial_y(\delta(t-\sqrt{|x|^2+y^2})).
\]
Note that $E_0$ is the negative derivative in the direction normal to the boundary of the Green-function of the mixed
problem of $\Delta_2 + \partial_y^2 - \partial_t^2$ in the half-space $y>0$ (compare \cite[p.\ 92]{Ga}).
We obtain the solution $U$ of the mixed problem with a temporally constant boundary value, $U|_{y=0}=\delta(x)$,
by convolution of $E_0$ with $\delta(x) \otimes Y(t)$:
\[
U = \frac{-Y(t)}{2\pi}\partial_y\left( \frac{Y(t^2-|x|^2-y^2)}{\sqrt{|x|^2+y^2}}  \right)
= \frac{-1}{2\pi} \partial_y\left( \frac{Y\left(t- \sqrt{|x|^2+y^2)}\right)}{\sqrt{|x|^2+y^2}}  \right),
\]
which coincides with \cite[p.\ 7]{BO}.
\end{itemize}
\end{remark}
The main idea in deriving the Poisson kernel of the Cauchy-Dirichlet problem $(\Delta_n+\partial_y^2-\partial_t^2)^m$ 
in the half-space $y>0$ (cf.\ the proof of Proposition \ref{prop:3}) is the application of the inverse Laplace
transformation to the Poisson kernel of the Dirichlet problem of $(\Delta_n+\partial_y^2-p^2)^m$ in $y>0$.
The Poisson kernel of the Cauchy-Dirichlet problem of the iterated Klein-Gordon operator 
$(\Delta_{2n+1}+\partial_y^2-\partial_t^2-\xi^2)^m$ ($\xi>0$) in the half-space $y>0$ can be
derived by the same method, using the Poisson kernel of  $(\Delta_{2n+1}+\partial_y^2-p^2-\xi^2)^m$
in $y>0$.
\begin{proposition}\label{prop:4}
The $j$-th \emph{Poisson kernel} $E_j$, $j=0,1,\dots,m-1$, of the Cauchy-Dirichlet problem of the \emph{iterated
Klein-Gordon operator} $(\Delta_{2n+1}+\partial_y^2-\partial_t^2-\xi^2)^m$ ($m\in \N$, $n\in \N_0$, $\xi>0$)
in the half-space $\half$ is given by
\begin{align*}
E_j = \frac{-y^m }{(2\pi)^{n+\frac{1}{2}}j!(m-1-j)!} &
(-\partial_y)^{m-j-1}\left(\frac{1}{y}\partial_y\right)
\left(\frac{1}{(|x|^2+y^2)^{\frac{n}{2}}} \sum_{l=0}^{n} \binom{n}{l} \xi^{n-2l+1} \right.\\
& \left. \partial_t^{2l} (Y(t)(t^2-|x|^2)-y^2)_+^{\frac{n}{2}-\frac{1}{4}} J_{n-\frac{1}{2}}(\xi\sqrt{t^2-|x|^2-y^2})\right),
\end{align*}
where $J_\lambda$ denotes the Bessel function of the first kind of order $\lambda$.
\end{proposition}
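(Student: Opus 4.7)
We mimic the proof of Proposition \ref{prop:3}: obtain $E_j$ as the inverse Laplace transform in $t$ of a Poisson kernel for a suitable iterated meta-harmonic operator in $\half_{1,xy}$. Under $\mathcal{L}$ the operator $\Delta_{2n+1}+\partial_y^2-\partial_t^2-\xi^2$ becomes $\Delta_{2n+1}+\partial_y^2-(p^2+\xi^2)$, so Proposition \ref{prop:2'}, applied with $n$ replaced by $2n+1$ and $p$ by $q\coleq\sqrt{p^2+\xi^2}$ (principal branch, holomorphic for $\operatorname{Re} p>0$), furnishes the candidate Laplace image
\[
F_j = \frac{-y^m q^n}{2^n\pi^{n+1}j!(m-1-j)!}(-\partial_y)^{m-j-1}\left(\frac{1}{y}\partial_y\right)\left(\frac{K_n(qR)}{R^n}\right), \quad R\coleq\sqrt{|x|^2+y^2},
\]
the exponent on $K$ being $(2n+1-1)/2=n$. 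That $F_j\in\mathcal{H}(T^\Gamma_p)(\mathcal{D}'(\half_{1,xy}))$ follows verbatim from the argument in Proposition \ref{prop:2'}: the integral representation \cite[8.432,7]{GR} of $K_n(qR)/R^n$ realizes it as a vector-valued scalar product of type $L^\infty\times L^1$ into $\mathcal{H}(T^\Gamma_p)\hat\otimes\mathcal{D}'(\half_{1,xy})$, and multiplication by the holomorphic factor $q^n$ preserves membership. Hence $E_j\coleq\mathcal{L}^{-1}F_j\in\mathcal{S}'(\R_{+,t})(\mathcal{D}'(\half_{1,xy}))$ and automatically satisfies $(\Delta_{2n+1}+\partial_y^2-\partial_t^2-\xi^2)^mE_j=0$ together with the Dirichlet conditions $\partial_y^kE_j|_{y=0}=\delta(x,t)\delta_{jk}$.

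\textbf{Explicit evaluation.} The only new calculation is that of $\mathcal{L}^{-1}\{q^n K_n(qR)/R^n\}$. Using $K_n(z)=\frac{\sqrt\pi(z/2)^n}{\Gamma(n+1/2)}\int_1^\infty e^{-zu}(u^2-1)^{n-1/2}\,du$ from \cite[8.432,1]{GR}, we rewrite this quantity as
\[
\frac{\sqrt\pi}{2^n\Gamma(n+1/2)}\,q^{2n}\int_1^\infty e^{-quR}(u^2-1)^{n-1/2}\,du.
\]
Under $\mathcal{L}^{-1}$ the factor $q^{2n}=(p^2+\xi^2)^n$ becomes the operator $(\partial_t^2+\xi^2)^n$, whose binomial expansion $\sum_{l=0}^n\binom{n}{l}\xi^{2(n-l)}\partial_t^{2l}$ matches, after pulling out $\xi^{n-1}$, the sum $\sum_l\binom{n}{l}\xi^{n-2l+1}\partial_t^{2l}$ in the proposition. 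For the remaining integral, the classical transform pair $\mathcal{L}^{-1}\{e^{-a\sqrt{p^2+\xi^2}}/\sqrt{p^2+\xi^2}\}(t)=Y(t-a)J_0(\xi\sqrt{t^2-a^2})$, combined with a Sonine-type radial integral in $u$, collapses the $u$-variable into $Y(t)(t^2-R^2)_+^{n/2-1/4}J_{n-1/2}(\xi\sqrt{t^2-R^2})/R^{n-1/2}$. Reassembling all prefactors yields exactly the stated formula. The vanishing of the Cauchy data $\partial_t^kE_j|_{t=0}=0$ for $k=0,\dots,2m-1$ then follows from the same homothety argument $x=t\xi$, $y=t\eta$ that closes the proof of Proposition \ref{prop:3}.

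\textbf{Main obstacle.} The crux is the closed-form inverse Laplace calculation, i.e.\ combining the integral representation of $K_n$, the non-elementary transform pair for $e^{-a\sqrt{p^2+\xi^2}}$, and a Sonine-type identity in order to collapse everything into a single expression involving only the half-integer Bessel function $J_{n-1/2}$ on the retarded cone $\{t^2>|x|^2+y^2\}$, with the correct binomial prefactors arising from $(\partial_t^2+\xi^2)^n$. All other steps---the vector-valued framework and the Cauchy-data check---transcribe directly from the proofs of Propositions \ref{prop:2'} and \ref{prop:3}.
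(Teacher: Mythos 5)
Your overall strategy matches the paper's: replace $p$ by $q=\sqrt{p^2+\xi^2}$ in Proposition~\ref{prop:2'} with $n\to 2n+1$, expand $(p^2+\xi^2)^n$ into the operator $\sum_l\binom{n}{l}\xi^{2(n-l)}\partial_t^{2l}$ under $\sL^{-1}$, and invert the remaining Bessel-$K$ factor. Where you diverge from the paper is in that final inversion: the paper simply cites the tabulated transform $\sL^{-1}\bigl(K_n(\beta\sqrt{p^2+\xi^2})/(p^2+\xi^2)^{n/2}\bigr)=\sqrt{\pi/2}\,Y(t)\,\xi^{-n+\frac12}\beta^{-n}(t^2-\beta^2)_+^{\frac{n}{2}-\frac14}J_{n-\frac12}(\xi\sqrt{t^2-\beta^2})$ from \cite[p.~125]{CL}, whereas you propose to recompute it from scratch via the Poisson integral representation of $K_n$ and a Sonine-type reduction.

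That alternative route, as you sketch it, has a concrete gap. After inserting $K_n(z)=\frac{\sqrt\pi(z/2)^n}{\Gamma(n+1/2)}\int_1^\infty e^{-zu}(u^2-1)^{n-\frac12}\,du$, the quantity whose inverse Laplace transform you need is $\int_1^\infty e^{-quR}(u^2-1)^{n-\frac12}\,du$, with $e^{-quR}$ and \emph{no} $1/q$ factor. The transform pair you invoke, $\sL^{-1}\{e^{-aq}/q\}=Y(t-a)J_0(\xi\sqrt{t^2-a^2})$, therefore does not apply term by term; what is needed is $\sL^{-1}\{e^{-aq}\}=\delta(t-a)-\frac{a\xi}{\sqrt{t^2-a^2}}J_1(\xi\sqrt{t^2-a^2})Y(t-a)$, which carries a Dirac term and a $J_1$ rather than the clean $J_0$ kernel, and makes the subsequent $u$-integral considerably more delicate. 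You also cannot repair this by pulling one extra power of $q$ inside the integral, since $q^{2n+1}=(p^2+\xi^2)^{n+\frac12}$ is not a polynomial in $p^2+\xi^2$ and so does not convert to a differential operator under $\sL^{-1}$. In short, the structural skeleton (Laplace transform, analytic continuation from Proposition~\ref{prop:2'}, binomial expansion, homothety argument for the vanishing Cauchy data) coincides with the paper's, but the closed-form inversion you flag as ``the crux'' is precisely where your sketch does not yet close: you would either need the correct $\sL^{-1}\{e^{-aq}\}$ and the resulting Sonine evaluation spelled out, or, as the paper does, an appeal to the ready-made entry in \cite[p.~125]{CL}. (As a minor side remark, the claimed collapse gives $R^{n-\frac12}$ in the denominator, whereas combining the two $\beta^{-n}=R^{-n}$ factors actually produces $R^{-2n}$; this is a bookkeeping point you should recheck if you carry the computation through.)
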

\begin{proof}
Similar to the proof of Proposition \ref{prop:3} we have by means of Proposition \ref{prop:2'}
\begin{align*}
E_j = \frac{-y^m }{2^n\pi^{n+1}j!(m-1-j)!}
&(-\partial_y)^{m-j-1}\left(\frac{1}{y}\partial_y\right)\\
&\left(\sL^{-1} 
\left(\frac{(p^2+\xi^2)^{\frac{n}{2}}K_{n}(\sqrt{(p^2+\xi^2)(|x|^2+y^2}))}{(|x|^2+y^2)^{\frac{n}{2}}}\right)\right)\\
= \frac{-y^m }{2^n\pi^{n+1}j!(m-1-j)!}
&(-\partial_y)^{m-j-1}\left(\frac{1}{y}\partial_y\right)\\
&\left(\sum_{l=0}^{n} \binom{n}{l} \xi^{2n-2l}\partial_t^{2l} 
\sL^{-1} 
\left(\frac{K_{n}(\sqrt{(p^2+\xi^2)(|x|^2+y^2}))}{\left((p^2+\xi^2)(|x|^2+y^2)\right)^{\frac{n}{2}}}\right)\right).
\end{align*}
By the formula
\[
\sL^{-1} 
\left(\frac{K_{n}\left(\beta\sqrt{(p^2+\xi^2)}\right)}{(p^2+\xi^2)^{\frac{n}{2}}}\right)
=\sqrt{\frac{\pi}{2}}Y(t) \xi^{-n+\frac{1}{2}}\beta^{-n} (t^2-\beta^2)_+^{\frac{n}{2}-\frac{1}{4}} 
J_{n-\frac{1}{2}}\left(\xi \sqrt{t^2-\beta^2}\right)
\]
in \cite[p.\ 125]{CL} we obtain
\begin{align*}
E_j=\frac{-y^m}{(2\pi)^{n+\frac{1}{2}}j!(m-1-j)!} & \sum_{l=0}^{n} \binom{n}{l} \xi^{n-2l+\frac{1}{2}} 
(-\partial_y)^{m-j-1}\left(\frac{1}{y}\partial_y\right)\partial_t^{2l} \\
& \left( \frac{Y(t)}{(|x|^2 + y^2)^{\frac{n}{2}}} (t^2-|x|^2 -y^2)_+^{\frac{n}{2}-\frac{1}{4}} 
J_{n-\frac{1}{2}}\left(\xi \sqrt{t^2-|x|^2-y^2}\right) \right),
\end{align*}
establishing our claim.
\end{proof}
\begin{remark}
\begin{itemize}
\item[(a)] In the special case $n=0$, $m=1$, $j=0$ we obtain
\[
E_0 = \frac{Y(t)}{\sqrt{2\pi}} \xi^{1/2}\partial_y \left( \frac{J_{-1/2}\left( \xi\sqrt{t^2-x^2-y^2} \right)}{(t^2-x^2-y^2)_+^{1/4}} \right)
= \frac{Y(t)}{\pi} \partial_y \left( \frac{\cos\left( \xi\sqrt{t^2-x^2-y^2} \right)}{(t^2-x^2-y^2)_+^{1/2}} \right)
\]
as the Poisson kernel of the Cauchy-Dirichlet problem of $\partial_x^2 + \partial_y^2-\partial_t^2-\xi^2$ ($\xi>0$) in $y>0$.
The solution $U$ to %the Cauchy-Dirichlet problem of $\partial_x^2 + \partial_y^2-\partial_t^2-\xi^2$ 
this problem in $y>0$
with the temporally constant boundary value $U|_{y=0} = \delta(x)$ emerges from $E_0$ by convolution with 
$\delta(x) \otimes Y(t)$, i.e.,  $U=E_0 * (Y(t)\otimes \delta(x,y))$. 

Note that for the Cauchy-Dirichlet problem of the related operator $\partial_x^2 + \partial_y^2-\partial_t^2-b \partial_t$
in $y>0$ with a temporally constant boundary value, the solution is given explicitly in \cite[Ex.\ 406, p.\ 189]{LSU} in terms
of elementary functions.
\item[(b)] The Poisson kernel of the Cauchy-Dirichlet problem of the iterated Klein-Gordon operator
$(\Delta_{2n} + \partial_y^2-\partial_t^2-\xi^2)^m$ in odd space dimensions can be deduced from that in 
Proposition \ref{prop:4} by J.\ Hadamard's method of descent, i.e., by integration with respect to the variable $x_{2n+1}$.
\end{itemize}
\end{remark}

{\bf Acknowledgments.} We thank Christian Bargetz for several helpful discussions, and in particular for
the idea to apply \cite[4.4, p.~139]{Schae} in the proof of Lemma \ref{lem}. We furthermore thank Andreas Debrouwere for pointing out an error in an earlier version of this paper. This work was supported by projects P26859 and P30233 of the Austrian Science Fund FWF.

\end{document}